\newtheorem{prop}{Proposition}
\newcommand\floor[1]{\lfloor#1\rfloor}
\begin{document}

\title*{Multi-Grid Schemes for Multi-Scale Coordination of Energy Systems}
\author{Sungho Shin and Victor M. Zavala}
\institute{Sungho Shin \at Department of Chemical and Biological Engineering, University of Wisconsin-Madison, 1415 Engineering Dr, Madison, WI 53706, USA \email{shin79@wisc.edu}
\and Victor M. Zavala \at Department of Chemical and Biological Engineering, University of Wisconsin-Madison, 1415 Engineering Dr, Madison, WI 53706, USA \email{zavalatejeda@wisc.edu}}
%
%
\maketitle

\vspace{-1in}

\abstract{We discuss how multi-grid computing schemes can be used to design hierarchical coordination architectures for energy systems. These hierarchical architectures can be used to manage multiple temporal and spatial scales and mitigate fundamental limitations of centralized and decentralized architectures. We present the basic elements of a multi-grid scheme, which includes a smoothing operator (a high-resolution decentralized coordination layer that targets phenomena at high frequencies) and a coarsening operator (a low-resolution centralized coordination layer that targets phenomena at low frequencies).  For smoothing, we extend existing convergence results for Gauss-Seidel schemes by applying them to systems that cover unstructured domains. This allows us to target problems with multiple timescales and arbitrary networks.  The proposed coordination schemes can be used to guide transactions in decentralized electricity markets. We present a storage control example and a power flow diffusion example to illustrate the developments.} 

\vspace{-0.3in}\section{Motivation and Setting}
\label{sec:1}
We consider the following optimization problem: 
\begin{subequations}\label{multi-scale control}
\begin{align}
\min_{{z}}\quad & \frac{1}{2}z^T Q z-c^Tz \\
\text{s.t.}\quad & Az+Bd={0},\quad (\nu)\label{eq:const}\\
\label{fix}&\Pi\, {z}=0,\quad\;\quad\;\;\; (\lambda)
\end{align}
\end{subequations}
Here, ${z}\in\mathbb{R}^{N\cdot n_z}$ are decision or primal variables (including states and controls) and ${d}\in\mathbb{R}^{N\cdot n_d}$ is the data (including disturbances and system parameters). These variable vectors contain elements that are distributed over a mesh with $N\in\mathbb{Z}$ points that covers a certain temporal or spatio-temporal domain of interest $\Omega$.  We define the set of points in the mesh as $\mathcal{N}$ with $|\mathcal{N}|=N$.  The matrix $Q\in \mathbb{R}^{N\cdot n_z\times N\cdot n_z}$ is positive definite and $c\in \mathbb{R}^{N\cdot n_z}$ is a cost vector. The constraint \eqref{eq:const} (with associated dual variables $\nu\in\mathbb{R}^{m}$) is defined by the matrices $A \in\mathbb{R}^{m \times N\cdot n_z}$ and $B\in \mathbb{R}^{ m\times N\cdot n_d}$ and the matrix $A$ is assumed to have full row rank. The constraints may include discretized dynamic equations (in space and time) and other physical constraints. The constraints \eqref{fix} (with associated dual variables $\lambda\in \mathbb{R}^{ p}$) are defined by the matrix $\Pi\in \mathbb{R}^{p\times N\cdot n_z}$. This constraint models coupling (connectivity) between the primal variables at different mesh points and can also be used to model boundary conditions.  Problem \eqref{multi-scale control} captures formulations used in optimization-based control strategies such as model predictive control (MPC) \cite{rawlingsbook}. 

We assume that the dimension of the mesh $N$ describing problem \eqref{multi-scale control} is so large that the problem cannot be solved in a {\em centralized} manner. This is often the case in systems that cover large temporal and spatial domains and/or multiple scales. In an electrical network, for instance,  a  large number of nodes and harmonics might need to be captured, rendering centralized control impractical. An alternative to address this complexity is to partition the problem into subdomains to create {\em decentralized} control architectures.  We begin by defining a partitioned version of problem \eqref{multi-scale control}:
\begin{subequations}\label{lifted problem}
\begin{align}
\min_{{z}_k}\quad & \sum_{k\in \mathcal{K}}\frac{1}{2}z_k^TQ_kz_k-c_k^Tz_k \\
\label{constraint 1}\text{s.t.}\quad  & A_kz_k+B_k d_k=0,\quad \,k\in \mathcal{K}\quad (\nu_k)\\
&\label{keep duplicate equal}
\sum_{k'\in \mathcal{K}}\Pi_{kk'} {z}_{k'}=0,\quad \,k\in \mathcal{K}\quad (\lambda_k)
\end{align}
\end{subequations}
We denote this problem as $\mathcal{P}$. Here, $\mathcal{K}$ is a set for partitions (subdomains) of the set $\mathcal{N}$ and we define the number of partitions as $K:=|\mathcal{K}|$. Each partition $k\in\mathcal{K}$ contains mesh elements $\mathcal{N}_k\subseteq \mathcal{N}$ satisfying $\cup_{k\in\mathcal{K}}\mathcal{N}_k=\mathcal{N}$ and $\mathcal{N}_k\cap\mathcal{N}_{k'}=\emptyset$ for all $k, k'\in\mathcal{K}$ and $k\neq k'$. The number of elements in a partition is denoted as $N_k:=|\mathcal{N}_k|$. The variables and data $({z}_k, {d}_k)$ are defined over the partition $k\in\mathcal{K}$. We represent the cost function as a sum of the partition cost functions with associated positive definite matrices $Q_k$ and cost vectors $c^k$.  The constraints are also split into individual partition constraints with associated matrices $A_k,B_k$ and we link the partition variables by using the coupling constraints \eqref{keep duplicate equal} and associated matrices $\Pi_{kk'}, \, k,k'\in \mathcal{K}$. As we will discuss later, we can always obtain such a representation by introducing duplicate decision variables in each partition and by adding suitable coupling constraints. This procedure is known as {\em lifting} \cite{diehllift}.

To {\em avoid centralized coordination}, a wide range of {\em decomposition} schemes (we also refer to them as {\em decentralized} coordination schemes) can be used.  A popular approach used in the solution of partial differential equations (PDEs) and decomposition methods such as the alternating direction method of multipliers (ADMM) is the {\em Gauss-Seidel} (GS) coordination scheme \cite{borzi,boydadmm}. Here, the problem in each partition $k$ (often called a control {\em agent}) is solved independently from the rest and exchanges information with its neighbors to coordinate. For a lifted problem of the form \eqref{lifted problem}, we will derive a decentralized GS scheme that solves problems over individual partitions $k\in\mathcal{K}$ of the form: 
\begin{subequations}\label{iterative scheme} 
\begin{align}
z_{k}^{\ell+1}=\mathop{\textrm{argmin}}_{{z}_k}\quad & \frac{1}{2}z_k^TQ_kz_k-{{{z}}}_k^T\left(c_k-\sum^{k-1}_{k'=1}{\Pi_{k'k}}^T\lambda_{k'}^{\ell+1}-\sum^{N}_{k'=k+1}{\Pi_{k'k}}^T\lambda_{k'}^\ell\right) \\
\text{s.t.}\quad  & A_kz_k+B_kd_k={0}\\
&{\Pi}_{kk}z_k+\sum_{k'=1}^{k-1} {\Pi}_{kk'}z_{k'}^{\ell+1}+\sum_{k'=k+1}^{K} {\Pi}_{kk'}z_{k'}^{\ell}=0 \quad (\lambda_k).\label{eq:coupling2}
\end{align}
\end{subequations}
We denote this partition subproblem as $\mathcal{P}_k^\ell$ that is solved at the update step $\ell\in\mathbb{Z}_+$ (that we call here the {\em coordination step}). From the solution of this problem we obtain the updated primal variables $z_k^{\ell+1}$ and dual variables $\lambda_k^{\ell+1}$ (corresponding to the coupling constraints \eqref{eq:coupling2}). Here, $z_{k'}^{\ell}$ and $\lambda_{k'}^{\ell}$ are primal and dual variables for neighboring partitions connected to partition $k$ and that have not been updated while $z_{k'}^{\ell+1}$ and $\lambda_{k'}^{\ell+1}$ are primal and dual variables for neighboring partitions  that have already been updated. We refer to the variables communicated between partitions as the {\em coordination variables}.   We note that partition $k$ cannot update its primal and dual variables until the variables of a subset of the partitions connected to it have been updated. Consequently, the GS scheme is sequential and synchronous in nature. We also note that the connectivity topology (induced by the coupling matrices $\Pi_{kk'}$)  determines the communication structure. We highlight, however, that the order of the updates presented in \eqref{iterative scheme} is lexicographic (in the order of the partition number) but this choice of update order is arbitrary and can be modified. We will see that the update order can be designed to derive parallel schemes (i.e., in which certain partitions can proceed independently of others) but that the order can affect performance.  In Figure \ref{step figure} we illustrate the configuration of a GS scheme over a 1-D mesh while in Figure \ref{spatial figure} we present a configuration for a 2-D mesh.  For the 2-D mesh, we note that the nodes spanning the domain are grouped into sets of the form $\mathcal{N}_{m,n}$ and we note that the information is exchanged using the state and dual variables in the boundary of the partitions. In Section \ref{sec:cases} we discuss this approach in more detail. 

In the next sections we derive and analyze a decentralized GS scheme to solve problem $\mathcal{P}$. The analysis seeks to illustrate how the structure of the partition subproblem $\mathcal{P}_k$ arises and to highlight how information of the coordination variables propagates throughout the partitions. We then discuss how to create {\em coarse representations} of the full resolution problem $\mathcal{P}$ to obtain approximations for the coordination variables and with this accelerate the decentralized GS scheme. This gives rise to the concept of {\em multi-grid schemes}, that  can be used to design hierarchical coordination architectures.  Our analysis is performed on convex quadratic programs (QPs), which will reveal important features of multi-grid schemes. 
 
\begin{figure}[!htb]
\begin{center}
\includegraphics[width=4.0in]{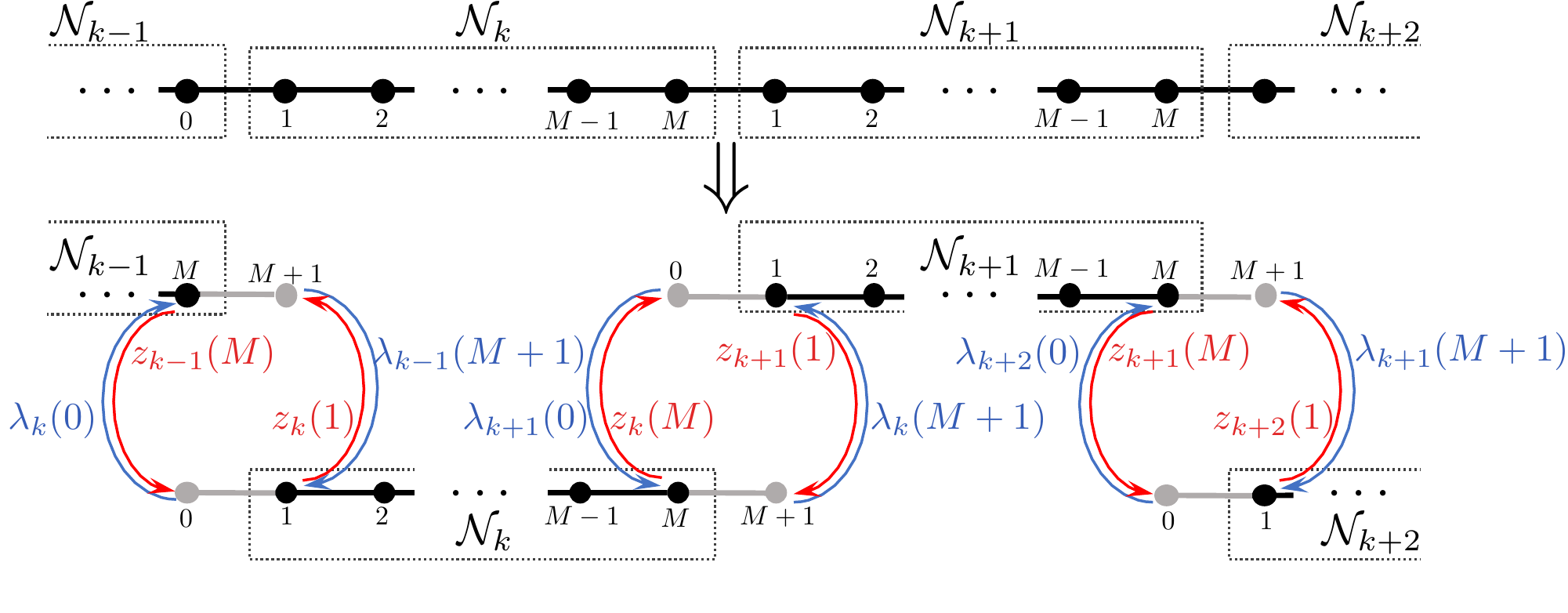}\caption{Configuration of a GS scheme over a 1-D mesh.}\label{step figure}
\end{center}
\end{figure}

\begin{figure}[!htb]
\begin{center}
\includegraphics[width=4.5in]{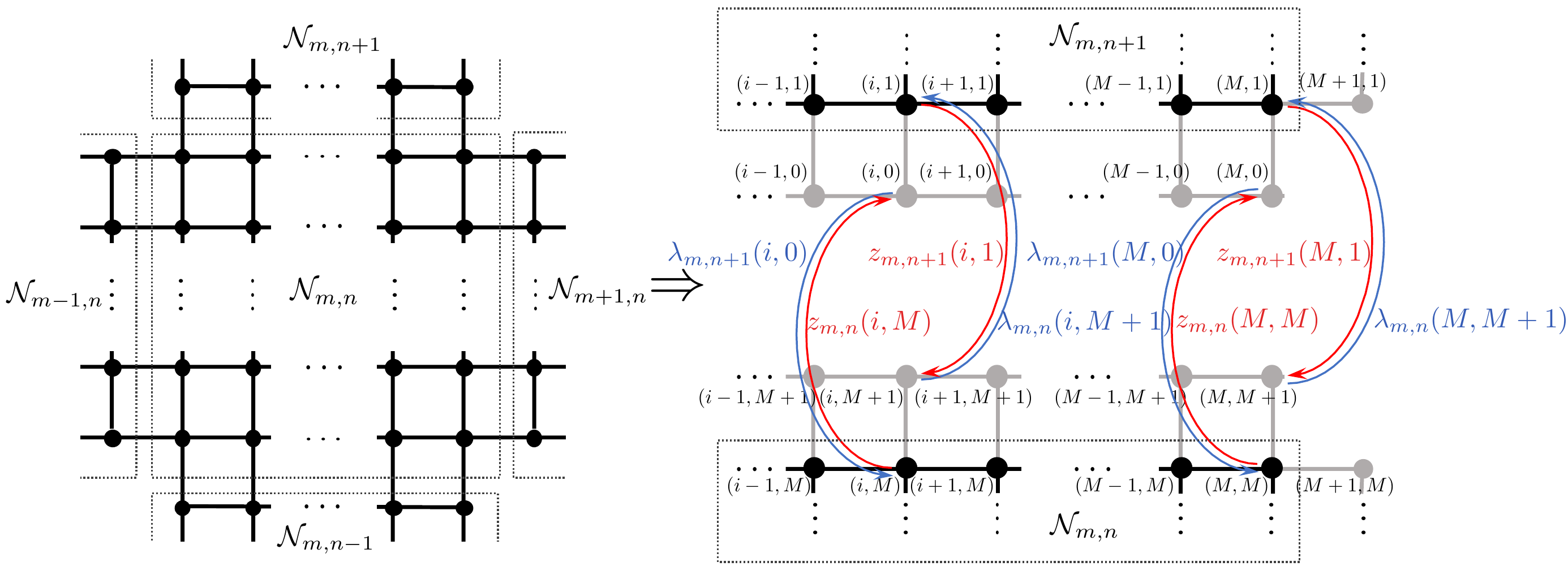}\caption{Configuration of a GS scheme over a 2-D mesh.}\label{spatial figure}
\end{center}
\end{figure}

The concepts discussed in this paper seek to extend existing literature on decentralized and hierarchical MPC. Many strategies have been proposed to address the complexity of centralized MPC such as spatial and temporal decomposition methods \cite{rawlingscooperative,kroghdecentralized,scattoreview,frasch,deschutter}, fast inexact schemes \cite{zavalaasnmpc,zavalage,diehlrealtime}, and reduced order modeling techniques \cite{daoutidis}. Decentralized control manages the entire system by coordinating multiple controllers, each operating in a different node or subnetwork. Decentralization also enables {\em resiliency and asynchronicity}, which are key practical advantages over centralized MPC. Different schemes for coordinating MPC controllers have been devised \cite{scattoreview}. Lagrangian dual decomposition is a technique where Lagrange multipliers are used for coordination. This technique is popular in electricity markets because the dual variables can be interpreted as prices that are used as a coordination mechanism \cite{illic,deschutter2,arnold}. Techniques based on coordinate minimization schemes and distributed gradient methods have also been proposed to coordinate MPC controllers in general settings \cite{rawlingsjpc,mpcadmm,liuadmm}.  An important limitation of decentralized schemes is that coordination of subsystems tends to be slow (e.g., convergence rates of existing schemes are at best linear) \cite{boydadmm,luoadmm,fisher}.  This slow convergence has been reported in the context of energy networks in \cite{arnold}. Moreover, spatial decentralization by itself does not address the complexity induced by multiple time scales. In particular, time scales and prediction horizons of different decentralized  controllers might be different. To the best of our knowledge, no coordination schemes currently exist to handle such settings. 

Hierarchical control seeks to overcome fundamental limitations of decentralized and centralized control schemes.  Fundamental concepts of hierarchical control date as far back as the origins of automatic control itself \cite{hierarbook}. Complex industrial control systems such as the power grid are structured hierarchically in one way or another to deal with multiple time and spatial scales. Existing hierarchies, however, are often constructed in ad-hoc manners by using objectives, physical models, and control formulations at different levels that are often {\em incompatible}. For instance, an independent system operator (ISO) solves a hierarchy of optimization problems (unit commitment, economic dispatch, optimal power flow) that use different physical representations of the system. This can lead to lost economic performance, unreachable/infeasible command signals, and instabilities \cite{rawlingsunreachable,zavalaipopt}. Hierarchical MPC provides a general framework to tackle dynamics and disturbances occurring at multiple time scales  \cite{scattohierar,scattoreview,hierarbook,zavalamultigrid} and spatial scales \cite{farina2017hierarchical}. In a traditional hierarchical MPC scheme, one uses a high-level controller to compute coarse control actions that are used as targets (commands) by low level controllers. This approach has been used recently in microgrids and multi-energy systems \cite{hierarresid,hug}. More sophisticated MPC controllers use robustness margins of the high level controller that are used by the lower level controller to maintain stability \cite{scattohierar}.  Significant advances in the analysis of multi-scale dynamical systems have also been made, most notably by the use of singular perturbation theory to derive reduced-order representations of complex networks \cite{kokotovicpower,chowpower,peponides,simon}. The application of such concepts in hierarchical MPC, however, has been rather limited. In particular, the recent review on hierarchical MPC by Scattolini notices that systematic design methods for hierarchical MPC are still lacking \cite{scattoreview}. More specifically, no hierarchical MPC schemes have been proposed that aggregate and {\em refine} trajectories at multiple scales. In addition, existing schemes have been tailored to achieve feasibility and stability but {\em do not have optimality guarantees}. This is important in systems where both economic efficiency and stability must be taken into account. To the best of our knowledge, no attempt has been made to combine hierarchical and decentralized MPC schemes to manage spatial and temporal scales simultaneously.  The multi-grid computing concepts presented in this work seek to take a first step towards creating more general hierarchical control architectures.   The proposed multi-grid schemes provide a framework to {\em coordinate decentralized electricity markets}. This is done by exchanging state and price (dual information) at the interfaces of the agents domain. The ability to do this hierarchically enables coordination over multiple spatial schemes, in particular, provides a framework to cover large geographical regions that might involve many market players.  
 
 \vspace{-0.3in}\section{Analysis of Gauss-Seidel Schemes}
 
This section presents basic concepts and convergence results for a GS scheme under a general convex QP setting. The results  seek to highlight how the structure of the coupling between partition variables as well as the coordination sequence affect the performance of GS schemes.  
 
\vspace{-0.2in}\subsection{Illustrative Setting}
To introduce notation, we begin by considering a convex QP with two variable partitions (i.e., $\mathcal{K}=\{1,2\}$). To simplify the presentation, we do not include internal partition constraints and focus on coupling constraints across partitions. Under these assumptions, we have the following lifted optimization problem:
\begin{subequations}\label{special problem}
\begin{align}
\min_{{z}_1,{z}_2}\quad & 
\frac{1}{2}
\begin{bmatrix}
{z}_1\\
{z}_2
\end{bmatrix}^T
\begin{bmatrix}
Q_1&\\
&Q_2
\end{bmatrix}
\begin{bmatrix}
{z}_1\\
{z}_2
\end{bmatrix}
-
\begin{bmatrix}
c_1\\
c_2
\end{bmatrix}^T
\begin{bmatrix}
{z}_1\\
{z}_2
\end{bmatrix}
 \\
\text{s.t.}\quad &
\underbrace{\begin{bmatrix}
\Pi_{11}&\Pi_{12}\\
\Pi_{21}&\Pi_{22}
\end{bmatrix}}_{\Pi}
\underbrace{\begin{bmatrix}
{z}_1\\
{z}_2
\end{bmatrix}}_{z}=
\begin{bmatrix}
0\\0
\end{bmatrix}\quad \begin{array}{c}(\lambda_1)\\ (\lambda_2) \end{array}
\end{align}
\end{subequations}
Here, the matrices $\Pi_{11},\Pi_{12}, \Pi_{21}, \Pi_{22}$ capture coupling between the variables $z_1$ and $z_2$. We highlight that, in general, $\Pi_{12}\neq \Pi_{21}$ (the coupling between partitions is not symmetric). The matrices $Q_1$ and $Q_2$ are positive definite and we assume that $\Pi_{11}$ and $\Pi_{22}$ have full row rank and that the entire coupling matrix $\Pi$ has full row rank. By positive definiteness of $Q_1$ and $Q_2$ and the full rank assumption of $\Pi$ we have that the feasible set is non-empty and the primal and dual solution of \eqref{special problem} is unique.  The coupling between partition variables is two-directional (given by the structure of the coupling matrix $\Pi$). This structure is found in 1-D linear networks. When the coupling is only one-directional, as is the case of temporal coupling, we have that $\Pi_{12}=0$ and thus $\Pi$ is block lower triangular, indicating that the primal variables only propagate forward in time. We will see, however, that backward propagation of information also exists, but in the space of the dual variables.   

The first-order KKT conditions of \eqref{special problem} are given by the linear system:
\begin{equation}\label{eq:kkt}
\begin{bmatrix}
Q_1&{\Pi_{11}}^T&&{\Pi_{21}}^T\\
\Pi_{11}&&\Pi_{12}&\\
&{\Pi_{12}}^T&Q_2&{\Pi_{22}}^T\\
\Pi_{21}&&\Pi_{22}&
\end{bmatrix}
\begin{bmatrix}
{z}_1\\
{\lambda}_1\\
{z}_2\\
{\lambda}_2
\end{bmatrix}
=
\begin{bmatrix}
c_1\\
0\\
c_2\\
0
\end{bmatrix}
\end{equation}
To avoid solving this system in a centralized manner we use a decentralized GS scheme with {\em coordination update} index $\ell\in\mathbb{Z}_+$. The scheme has the form:
\begin{subequations}
\label{step}
\begin{align}\label{step1}
\begin{bmatrix}
Q_1&{\Pi_{11}}^T\\
{\Pi_{11}}
\end{bmatrix}
\begin{bmatrix}
{z}_1^{\ell+1}\\
{\lambda}_1^{\ell+1}
\end{bmatrix}
&=
\begin{bmatrix}
c_1\\
0
\end{bmatrix}
-
\begin{bmatrix}
&{\Pi_{21}}^T\\
{\Pi_{12}}&
\end{bmatrix}
\begin{bmatrix}
{z}_2^{\ell}\\
{\lambda}_2^{\ell}
\end{bmatrix}\\
\label{step2}
\begin{bmatrix}
Q_2&{\Pi_{22}}^T\\
{\Pi_{22}}
\end{bmatrix}
\begin{bmatrix}
{z}_2^{\ell+1}\\
{\lambda}_2^{\ell+1}
\end{bmatrix}
&=
\begin{bmatrix}
c_2\\
0
\end{bmatrix}
-
\begin{bmatrix}
&{\Pi_{12}}^T\\
{\Pi_{21}}&
\end{bmatrix}
\begin{bmatrix}
{z}_1^{\ell+1}\\
{\lambda}_1^{\ell+1}
\end{bmatrix}
\end{align}
\end{subequations}

This scheme requires an initial guess for the variables of the second partition $({z}_2,{\lambda}_2)$ that is used to update $(z_1,\lambda_1)$ and we then proceed to update the variables of the second partition. Note, however, that we have picked this {\em coordination order} arbitrarily (lexicographic order). In particular, one can start with an initial guess of the first partition to update the second partition and then update the first partition (reverse lexicographic order). This scheme has the form:
\begin{subequations}
\begin{align}
\begin{bmatrix}
Q_2&{\Pi_{22}}^T\\
{\Pi_{22}}
\end{bmatrix}
\begin{bmatrix}
{z}_2^{\ell+1}\\
{\lambda}_2^{\ell+1}
\end{bmatrix}
&=
\begin{bmatrix}
c_2\\
0
\end{bmatrix}
-
\begin{bmatrix}
&{\Pi_{12}}^T\\
{\Pi_{21}}&
\end{bmatrix}
\begin{bmatrix}
{z}_1^{\ell}\\
{\lambda}_1^{\ell}
\end{bmatrix}\\ 
\begin{bmatrix}
Q_1&{\Pi_{11}}^T\\
{\Pi_{11}}
\end{bmatrix}
\begin{bmatrix}
{z}_1^{\ell+1}\\
{\lambda}_1^{\ell+1}
\end{bmatrix}
&=
\begin{bmatrix}
c_1\\
0
\end{bmatrix}
-
\begin{bmatrix}
&{\Pi_{21}}^T\\
{\Pi_{12}}&
\end{bmatrix}
\begin{bmatrix}
{z}_2^{\ell+1}\\
{\lambda}_2^{\ell+1}
\end{bmatrix}
\label{step3}
\end{align}
\end{subequations}

We will see that the coordination order affects the convergence properties of the GS scheme. In problems with many partitions, we will see that a large number of coordination orders are possible. Moreover, we will see that coordination orders can be designed to enable sophisticated parallel and asynchronous implementations.  A key observation is that the linear systems \eqref{step} in the GS scheme are the first-order KKT conditions of the following partition problems $\mathcal{P}_1$ and $\mathcal{P}_2$, respectively:
\begin{subequations}
\label{opt iter}
\begin{align}
\label{opt iter 1a}
{z}_1^{\ell+1}=\mathop{\textrm{argmin}}_{{z}_1} &\quad\frac{1}{2}z_1^T Q_1 {z}_1 - z_1^T\left({c_1}-\Pi_{21}^T {\lambda}_2^{\ell}\right) \\
\label{opt iter 1b}
\text{s.t.} &\quad \Pi_{11}{z}_1+\Pi_{12} {z}_2^{\ell}=0 \quad (\lambda_1)
\end{align}
\begin{align}
\label{opt iter 2a}
{z}_2^{\ell+1}=\mathop{\textrm{argmin}}_{{z}_2}&\quad\frac{1}{2}z_2^T Q_2 {z}_2 - z_2^T\left({c_2} -\Pi_{12} ^T {\lambda}_1^{\ell+1}\right)\\
\label{opt iter 2b}
\text{s.t.}&\quad  \Pi_{22}{z}_2+\Pi_{21} {z}_1^{\ell+1}=0 \quad (\lambda_2)
\end{align}
\end{subequations}
The relevance of this observation is that one can implement the GS scheme by {\em directly solving optimization problems}, as opposed to performing intrusive linear algebra calculations \cite{zavalamultigrid}. This has practical benefits, as one can use algebraic modeling languages and handle sophisticated problem formulations. This also reveals that {\em both primal and dual variables} are communicated between partitions. Primal information enters in the coupling constraints. The dual variables enter as cost terms in the objective and highlights the fact that dual variables can be interpreted as {\em prices of the primal variable information exchanged} between partitions. 

We now seek to establish conditions guaranteeing convergence of this simplified GS scheme. To do so, we define the following matrices and vectors:
\begin{subequations}
\begin{align}
\label{define 0}
&A_1:=
\begin{bmatrix}
Q_1&{\Pi_{11}}^T\\
{\Pi_{11}}
\end{bmatrix},\;
{x}_	1^\ell:=
\begin{bmatrix}
{z}_1^{\ell}\\
{\lambda}_1^{\ell}
\end{bmatrix}, \;
{b}_1:=
\begin{bmatrix}
c_1\\
0
\end{bmatrix},\;
B_{12}:=
\begin{bmatrix}
&-{\Pi_{21}}^T\\
-{\Pi_{12}}&
\end{bmatrix}\\
\label{define 1}
&A_2:=
\begin{bmatrix}
Q_2&{\Pi_{22}}^T\\
{\Pi_{22}}
\end{bmatrix},\;
{x}_2^\ell:=
\begin{bmatrix}{z}_2^{\ell}\\
{\lambda}_2^{\ell}
\end{bmatrix},\;
{b}_2:=
\begin{bmatrix}
c_2\\
0
\end{bmatrix},\;
B_{21}:=
\begin{bmatrix}
&-{\Pi_{12}}^T\\
-{\Pi_{21}}&
\end{bmatrix}.
\end{align}
\end{subequations}
The partition matrices $A_1$, $A_2$ are non-singular because the matrices $Q_1$, $Q_2$ are positive definite and $\Pi_{11}$, $\Pi_{22}$ have full row rank. Nonsingularity of $A_1$ and $A_2$ implies that the partition optimization subproblems $\mathcal{P}_1$ and $\mathcal{P}_2$ have a unique solution for any values of the primal and dual variables of the neighboring partition.  We can now express $({x}_1^{\ell+1},{x}_2^{\ell+1})$ in terms of $({x}_1^\ell,{x}_2^\ell)$ to obtain a recursion of the form: 
\begin{gather}
\begin{bmatrix}
A_1&&\\
-B_{21}&A_2
\end{bmatrix}
\begin{bmatrix}
{x}_1^{\ell+1}\\
{x}_2^{\ell+1}
\end{bmatrix}
=
\begin{bmatrix}
\quad&B_{12}\\
\quad & 
\end{bmatrix}
\begin{bmatrix}
{x}_1^{\ell}\\
{x}_2^{\ell}
\end{bmatrix}
+
\begin{bmatrix}
{b}_1\\
{b}_2
\end{bmatrix}.
\end{gather}
We can write this system in compact form by defining:
\begin{equation}\label{define S}
{w}^{\ell}:=
\begin{bmatrix}
{x}_1^{\ell}\\
{x}_2^{\ell}
\end{bmatrix},\quad
S:=
\begin{bmatrix}
A_1&&\\
-B_{21}&A_2
\end{bmatrix}^{-1}\begin{bmatrix}
\quad&B_{12}\\
\quad & 
\end{bmatrix}
,\quad
{r}:=\begin{bmatrix}
A_1&&\\
-B_{21}&A_2
\end{bmatrix}^{-1}\begin{bmatrix}
{b}_1\\
{b}_2
\end{bmatrix}.
\end{equation}
The solution of the $\ell$-th update step of \eqref{step} can be represented as:
\begin{subequations}
\begin{align}
{w}^{\ell+1}&=S{w}^\ell+{r}.\label{sol a}\\
&=S^\ell{w}^0+\left(I+S+\cdots+S^{\ell-1}\right){r}.\label{sol b}
\end{align}
\end{subequations}
The solution of the QP \eqref{special problem} (i.e., the solution of the KKT system \eqref{eq:kkt}) solves the implicit system ${w}=S{w}+{r}$, which can also be expressed as $(I-S)w=r$ or $w=(I-S)^{-1}r$. Consequently, we note that the eigenvalues of matrix $S$ play a key role in the convergence of  the GS scheme \eqref{opt iter}. We discuss this in more detail in the following section.

\vspace{-0.2in}\subsection{General Setting}

We now extend the previous analysis to a more general QP setting with an arbitrary number of partitions.  Here, we seek to illustrate how to perform {\em lifting} in a general case where coupling is implicit in the model and how to derive a GS scheme to solve such a problem.  Our discussion is based on the following convex QP:
\begin{equation} \label{general}
\min_{{z}}\quad  \frac{1}{2}{z}^T Q {z} -{c}^T{z}
\end{equation}
Here ${z}=(z(1),z(2),\cdots,z(N))\in \mathbb{R}^{N}$ are the optimization variables, $Q\in\mathbb{R}^{N\times N}$ is a positive definite matrix, and $c\in\mathbb{R}^{N}$ is the cost vector. For simplicity (and without loss of generality) we assume that $n_z=1$ (there is only one variable per node). We let $Q_{ij}$ represent the $(i,j)$-th component of matrix $Q$ and we let $\mathcal{N}=\{1,2,\cdots,N\}$ be the variable indices (in this case also the node indices). Problem \eqref{general} has a unique solution because the matrix $Q$ is positive definite. 

We focus our analysis on the QP \eqref{general} because we note that equality constraints $\bar{A}z+\bar{B}d=0$ in \eqref{multi-scale control} (with $\bar{A}^T=[A^T\, \Pi^T]$ and $\bar{B}^T=[B^T\,0 ]$) can be eliminated by using a null-space projection procedure. To see how this can be achieved we note that, if $A$ has full row rank, we can always construct a matrix $Z\in \mathbb{R}^{N\times \tilde{N}}$ whose columns span the null-space of $\bar{A}$ (i.e., $\bar{A}Z\tilde{z}=0$ for any $\tilde{z}\in \mathbb{R}^{\tilde{N}}$) and with $\tilde{N}<N$. Similarly, we can construct a matrix $Y\in\mathbb{R}^{N \times(N-\tilde{N})}$ whose columns span the range-space of $\bar{A}^T$ (i.e., $Y\tilde{y}\in \textrm{Range}(\bar{A}^T )$ for any $\tilde{y}\in \mathbb{R}^{N-\tilde{N}}$). We can express any $z\in \mathbb{R}^N$ as $z=Z\tilde{z}+Y\tilde{y}$. We thus have that $\bar{A}z=\bar{A}Y\tilde{y}$ for all $\tilde{z}$ and thus $\tilde{y}=-(\bar{A}Y)^{-1}\bar{B}d$ and $z=-Y(AY)^{-1}\bar{B}d+Z\tilde{z}$ satisfies $\bar{A}z=-\bar{B}d$ for any $\tilde{z}$. With this, we can express the quadratic objective as $ \frac{1}{2}{z}^T Q {z} -{c}^T{z}$ as $ \frac{1}{2}\tilde{z}^T Z^TQ Z\tilde{z} -{c}^TZ\tilde{z}-(Y(AY)^{-1} \overline{B} d)^T Q Z \tilde{z}+\kappa$, where $\kappa$ is a constant. We thus obtain a QP of the same form as \eqref{general} but with matrix $Q\leftarrow Z^TQZ$, reduced cost $c\leftarrow Z^Tc+Z^T Q Y (AY)^{-1} \overline {B} d$, and variable vector $z\leftarrow \tilde{z}$. We highlight that this reduction procedure does not need to be applied in a practical implementation but we only use it to justify that the formulation \eqref{general} is general.  

We thus have that the variables $z$ in \eqref{general} are coupled implicitly via the matrix $Q$ and we seek to express this problem in the lifted form. We proceed to partition the set $\mathcal{N}$ into a set of partitions $\mathcal{K}=\{1,..,K\}$ to give the partition sets $\mathcal{N}_1,\mathcal{N}_2,\cdots,\mathcal{N}_K\subseteq\mathcal{N}$ satisfying $\mathcal{N}=\mathcal{N}_1\cup \mathcal{N}_2\cup \cdots \cup \mathcal{N}_K$ and $\mathcal{N}_k \cap \mathcal{N}_{k'}=\emptyset \quad \text{for all } k,k'\in \mathcal{K}\text{ and } k\neq k'$.  Coupling between variables arises when $Q_{ij}\neq0$ for $i\in \mathcal{N}_k$, $j\in \mathcal{N}_k'$, and $k\neq k'$. We perform lifting by defining index sets:
\begin{align}\label{partitioning2}
\underline{\mathcal{N}}_k:=\{j\in \mathcal{N}\setminus \mathcal{N}_k \mid \exists\, i\in \mathcal{N}_k \text{ s.t. } Q_{ij}\neq0\},\quad \overline{\mathcal{N}}_k:=\mathcal{N}_k \cup \underline{\mathcal{N}}_k
\end{align} 
The set $\underline{\mathcal{N}}_k$ includes all the coupled variables in the partition set $\mathcal{N}_k$ that are not in partition $k$. The set $\overline{\mathcal{N}}_k$ includes all variables in partition $\mathcal{N}_k$ and its coupled variables. These definitions allow us to express problem \eqref{general} as:
\begin{equation}\label{partition1}
\min_{{z}}\quad\frac{1}{2}\sum_{k\in\mathcal{K}} \sum_{i\in \mathcal{N}_k} \sum_{j\in \overline{\mathcal{N}}_k} Q_{ij}z(i) z(j)-\sum_{k\in\mathcal{K}} \sum_{i\in \mathcal{N}_k} c_i z(i).
\end{equation}
To induce lifting, we introduce a new set of variables $\{{z}_1,{z}_2,\cdots,{z}_K\}$ defined as:
\begin{equation}\label{partitioning3}
{z}_k:=\begin{bmatrix}
z(k_1)\\ \vdots \\ z(k_{N_k})
\end{bmatrix},\quad
\underline{{z}}_k=\begin{bmatrix}
z(k_{N_k+1})\\
\vdots\\
z(k_{N_k+\underline{N}_k})
\end{bmatrix},\quad
\overline{{z}}_k:=\begin{bmatrix}
{z}_k\\ \underline{ {z}}_k
\end{bmatrix}
\end{equation}
where $\mathcal{N}_k=\{k_1, k_2, \cdots, k_{N_k}\}$, $\underline{\mathcal{N}}_k=\{k_{N_k+1},k_{N_k+2}, \cdots,k_{N_k+\underline{N}_k}\}$, $N_k$ is the number of variables in partition $\mathcal{N}_k$, and $\underline{N}_k$ is the number of variables coupled to partition $k$. With this, we can express problem \eqref{partition1} in the following {\em lifted} form:
\begin{subequations} \label{partition2}
\begin{align}
\min_{{z}}\quad&\sum_{k\in\mathcal{K}}\frac{1}{2} {\overline{{z}}_k}^T\overline{Q}_k \overline{{z}}_k- {\overline{{c}}_k}^T\overline{{z}}_k.\\
\text{s.t.}\quad&{\Pi}_{kk}\overline{{z}}_k+\sum_{k'\in\mathcal{K}\setminus\{k\} } {\Pi}_{kk'}\overline{{z}}_{k'}=0,\quad k\in\mathcal{K}.
\end{align}
\end{subequations}
Here, $\overline{Q}_k \in \mathbb{R}^{(N_k+\underline{N}_k)\times(N_k+\underline{N}_k)}$ and $\overline{c}_k \in \mathbb{R}^{(N_k+\underline{N}_k)}$ are given by:
\begin{subequations}
\begin{align}
\label{def Q}
(\overline{Q}_k)_{ij}=
\begin{cases}
Q_{k_i k_j}, &\text{for }k_i,k_j\in \mathcal{N}_k\\
\frac{1}{2} Q_{k_i k_j},  &\text{for } k_i\in \mathcal{N}_k \text{ and } k_j\notin \mathcal{N}_k\\
\frac{1}{2} Q_{k_i k_j},  &\text{for } k_i\notin \mathcal{N}_k \text{ and } k_j\in \mathcal{N}_k\\
0, \quad &\text{otherwise}
\end{cases},\qquad (\overline{{c}}_k)_{i}=\begin{cases}
c_{k_i}, &\text{for }k_i\in \mathcal{N}_k\\
0, &\text{for } k_i \notin \mathcal{N}_k
\end{cases}
\end{align}
\end{subequations}
The coefficient matrices $\Pi_{kk} \in \mathbb{R}^{\underline{N}_k\times(N_k+\underline{N}_k)}$ are given by:
\begin{equation}
\label{def pi}
\Pi_{kk}=\begin{bmatrix}
{e_{N_k+1}}^T\\
\vdots\\
{e_{N_k+\underline{N}_k}}^T
\end{bmatrix},
\end{equation} 
where $e_i\in \mathbb{R}^{N_k+\underline{N}_k}$ are elementary column vectors. Note that for $Q_{ij} \neq 0$, $i\in \mathcal{N}_k$, $j \in \mathcal{N}_{k'}$, and $k\neq k'$, $Q_{ij}$ can be included in the objective function of either partition $k$, partition $k'$, or both. In the lifting scheme shown in \eqref{def Q}, we assume that these terms are equally divided and included in each partition. However, this approach is arbitrary, and other lifting schemes are possible. In other words, we can manipulate the lifting scheme to set the partition sets to satisfy either $j \in \underline{\mathcal{N}}_k$ or $j \notin \underline{\mathcal{N}}_k$.  Interestingly, one can show that the solution of the lifted problem is unique and is the same as that of  problem \eqref{general}. The proof of this assertion is intricate and will not be discussed here due to the lack of space.  To simplify the notation we express the lifted variables $\bar{z}_k$, matrices $\bar{Q}_k$, and cost vectors $\bar{c}_k$ in \eqref{partition2} simply as $z_k,Q_k,c_k$.

The primal-dual solution of \eqref{partition2} can be obtained by solving the KKT conditions:

%
\begin{footnotesize}
\begin{equation}\label{general big2}
\begin{bmatrix}
Q_1&\Pi_{11}^T&&\Pi_{21}^T&&\cdots&&\Pi_{K1}^T \\
\Pi_{11}&&\Pi_{12}&&\cdots&&\Pi_{1K}& \\
&\Pi_{12}^T&Q_2&\Pi_{22}^T&&&&\vdots\\
\Pi_{21}&&\Pi_{22}&&&&\vdots& \\
&\vdots&&&\ddots&&&\vdots \\
\vdots&&&&&\ddots&\vdots& \\
&\Pi_{1K}^T&&\cdots&&\cdots&Q_K&\Pi_{KK}^T \\
\Pi_{K1}&&\cdots&&\cdots&&\Pi_{KK}& \\
\end{bmatrix}
\begin{bmatrix}
z_1\\
\lambda_1\\
z_2\\
\lambda_2\\
\vdots\\
z_K\\
\lambda_K
\end{bmatrix}=
\begin{bmatrix}
c_1\\
{0}\\
c_2\\
{0}\\
\vdots\\
c_K\\
{0}
\end{bmatrix}
\end{equation}
\end{footnotesize}
By exploiting the structure of this system, we can derive a GS scheme of the form:
\begin{equation}\label{general iter}
\begin{bmatrix}
Q_k&\Pi_{kk}^T\\
\Pi_{kk}&
\end{bmatrix}
\begin{bmatrix}
z_k^{\ell+1}\\
\lambda_k^{\ell+1}
\end{bmatrix}=
\begin{bmatrix}
c_k\\
{0}
\end{bmatrix}-
\sum_{k'=1}^{k-1}
\begin{bmatrix}
0&\Pi_{k'k}^T\\
\Pi_{kk'}&0
\end{bmatrix}
\begin{bmatrix}
z_{k'}^{\ell+1}\\
\lambda_{k'}^{\ell+1}
\end{bmatrix}
-
\sum_{k'=k+1}^{K}
\begin{bmatrix}
0&\Pi_{k'k}^T\\
\Pi_{kk'}&0
\end{bmatrix}
\begin{bmatrix}
z_{k'}^{\ell}\\
\lambda_{k'}^{\ell}
\end{bmatrix}.
\end{equation}
Here, we have used a lexicographic coordination order. We note that the solution of the linear system \eqref{general iter} solves the optimization problem:
\begin{subequations}
\label{general opt iter}
\begin{align}
z_k^{\ell+1}=\mathop{\textrm{argmin}}_{z_k} &\quad\frac{1}{2}{z_k}^TQ_k z_k- {{{z}}}_k^T\left({{{c}_k}} -\sum^{k-1}_{k'=1}\Pi_{k'k}^T\lambda_{k'}^{\ell+1}-\sum^{K}_{k'=k+1}\Pi_{k'k}^T\lambda_{k'}^\ell\right)\\
\text{s.t.}&\quad{\Pi}_{kk}z_k+\sum_{k'=1}^{k-1} {\Pi}_{kk'}z_{k'}^{\ell+1}+\sum_{k'=k+1}^{K} {\Pi}_{kk'}z_{k'}^{\ell}=0 \quad (\lambda_k).
\end{align}
\end{subequations}
From this structure, we can  see how the primal and dual variables propagate forward and backward relative to the partition $k$, due to the inherent block triangular nature of the GS scheme.  We now seek to establish a condition that guarantees convergence of the GS scheme in this more general setting.  To do so, we define: 
\begin{equation}
\label{general define}
A_k:=
\begin{bmatrix}
Q_k&{\Pi_{kk}}^T\\
\Pi_{kk}&
\end{bmatrix},\quad
{x}_k^\ell:=
\begin{bmatrix}
{z}_k^{\ell}\\
{\lambda}_k^{\ell}
\end{bmatrix}, \quad
{b}_k:=
\begin{bmatrix}
c_k\\
{0}
\end{bmatrix},\quad
B_{kk'}:=
\begin{bmatrix}
&-{\Pi_{k'k}}^T\\
-\Pi_{kk'}&
\end{bmatrix}.
\end{equation}
By using \eqref{general define}, we can express \eqref{general iter} as:
\begin{equation}\label{general new}
A_k{x}_k^{\ell+1}={b}_k+\sum^{k-1}_{k'=1}B_{kk'}{x}_{k'}^{\ell+1}+\sum^{K}_{k'=k+1}B_{kk'}{x}_{k'}^\ell.
\end{equation}
This can be expressed in matrix form as:
\begin{gather}
\begin{bmatrix}
A_1&&&\\
-B_{21}&A_2&& \\
\vdots&\ddots&\ddots&\\
-B_{K1}&\cdots&-B_{KK-1}&A_K
\end{bmatrix}
\begin{bmatrix}
{x}_1^{\ell+1}\\
{x}_2^{\ell+1}\\
\vdots\\
{x}_K^{\ell+1}
\end{bmatrix}
=
\begin{bmatrix}
\quad&B_{12}&\cdots&B_{1K}\\
\quad&& \ddots&\vdots \\
\quad&&&B_{K-1K}\\
\quad
\end{bmatrix}
\begin{bmatrix}
{x}_1^{\ell}\\
{x}_2^{\ell}\\
\vdots\\
{x}_K^{\ell}
\end{bmatrix}
+
\begin{bmatrix}
{b}_1\\
{b}_2\\
\vdots\\
{b}_K
\end{bmatrix}.\label{before rearrangement}
\end{gather}


We can see that the partition matrices $A_k$ are nonsingular by inspecting the block structure of $Q_{k}$. In particular, by the definition of $Q_{k}$ in \eqref{def Q} (we denoted $Q_k$ as $\overline{Q}_k$ here) and the definition of $\Pi_{kk}$ in \eqref{def pi}, we can express $A_k$ as:
\begin{equation}
A_k=\begin{bmatrix}
\hat{Q}_k & \underline{Q}_k^T & \\
{\underline{Q}_k} & & I\\
& I &
\end{bmatrix}
\end{equation}
where each components of $\hat{Q}_k\in \mathbb{R}^{N_k \times N_k}$ and $\underline{Q}_{k}\in \mathbb{R}^{\underline{N}_k \times N_k}$ are defined in \eqref{def Q}. Since $Q$ is positive definite, $\hat{Q}_k$ is also positive definite. Noting that $\hat{Q}_k$ is nonsingular, we can see that the columns of $A_k$ are linearly independent and thus $A_k$ is nonsingular as well. This implies that the block lower triangular matrix on the left-hand side of \eqref{before rearrangement} is also nonsingular. To simplify notation we define: 
\vspace{-0.1in}
\begin{subequations}
\begin{align}
{w}^{\ell}&:=
\begin{bmatrix}
{x}_1^{\ell}\\
{x}_2^{\ell}\\
\vdots\\
{x}_K^{\ell}
\end{bmatrix},\quad {r}:=
\begin{bmatrix}
A_1&&&\\
-B_{21}&A_2&& \\
\vdots&\ddots&\ddots&\\
-B_{K1}&\cdots&-B_{KK-1}&A_K
\end{bmatrix}^{-1}
\begin{bmatrix}
{b}_1\\
{b}_2\\
\vdots\\
{b}_K
\end{bmatrix}
\label{general define S}\\
S&:=
\begin{bmatrix}
A_1&&&\\
-B_{21}&A_2&& \\
\vdots&\ddots&\ddots&\\
-B_{K1}&\cdots&-B_{KK-1}&A_K
\end{bmatrix}^{-1}
\begin{bmatrix}
\quad&B_{12}&\cdots&B_{1K}\\
\quad&& \ddots&\vdots \\
\quad&&&B_{K-1K}\\
\quad
\end{bmatrix}.
\end{align}
\end{subequations}
We express \eqref{before rearrangement} by using the compact form ${w}^{\ell+1}=S{w}^\ell+{r}$ or $w^{\ell+1}=S^\ell{w}^0+\left(I+S+\cdots+S^{\ell-1}\right){r}$. The solution of \eqref{general big2} satisfies ${{w}}=S{w}+{r}$. This implies that the solution of the lifted problem also solves the original problem \eqref{general}.  We now formally establish the following convergence result for the GS scheme. 
\begin{prop}
\label{general suf cond}
The GS scheme \eqref{general opt iter} converges to the solution of \eqref{general} if all the eigenvalues of the matrix:
\begin{scriptsize}
\begin{align*}
\begin{split}
\Sigma:=\begin{bmatrix}
Q_1&&&&\Pi_{11}^T&&& \\
&Q_2&&&\Pi_{12}^T&\Pi_{22}^T&& \\
&&\ddots&&\vdots&\ddots&\ddots& \\
&&&Q_K&\Pi_{1K}^T&\cdots&\Pi_{K-1K}^T&\Pi_{KK}^T \\
\Pi_{11}&&&&&&& \\
\Pi_{21}&\Pi_{22}&&&&&& \\
\vdots&\ddots&\ddots&&&&& \\
\Pi_{K1}&\cdots&\Pi_{KK-1}&\Pi_{KK}&&&& \\
\end{bmatrix}^{-1}\begin{bmatrix}
&&&&&-\Pi_{21}^T&\cdots&-\Pi_{K1}^T \\
&&&&&&\ddots&\vdots \\
&&&&&&&-\Pi_{KK-1}^T \\
&&&&&&& \\
\quad&-\Pi_{12}&\cdots&-\Pi_{1K}&&&& \\
&&\ddots&\vdots&&&& \\
&&&-\Pi_{K-1K}&&&& \\
&&&&&&& \\
\end{bmatrix}
\end{split}
\end{align*}
\end{scriptsize}
have magnitude less than one.
\end{prop}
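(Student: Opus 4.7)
The plan is to identify the matrix $\Sigma$ in the proposition with the iteration matrix $S$ defined in \eqref{general define S} (up to a similarity transformation), and then invoke the standard convergence result for matrix iterations of the form $w^{\ell+1}=Sw^\ell+r$.

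The first step is to recall from the derivation leading to \eqref{before rearrangement} that the GS iterates satisfy $w^{\ell+1}=Sw^\ell+r$ and that the primal-dual solution $w^\ast$ of the lifted KKT system \eqref{general big2} satisfies the fixed-point equation $w^\ast=Sw^\ast+r$. If $\rho(S)<1$, then $I-S$ is nonsingular, $S^\ell\to 0$, and the partial sums $\sum_{j=0}^{\ell-1}S^j$ converge to $(I-S)^{-1}$, which together with \eqref{sol b}-style expansion give $w^\ell\to (I-S)^{-1}r=w^\ast$. Since the lifted problem was shown earlier to have the same solution as \eqref{general}, this establishes convergence to the solution of the original QP.

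The core technical step is to show that $\Sigma$ and $S$ have the same spectrum. I would do this by exhibiting a permutation matrix $P$ that reorders the stacked vector $(z_1,\lambda_1,z_2,\lambda_2,\ldots,z_K,\lambda_K)$ into the ``all-primal-then-all-dual'' order $(z_1,\ldots,z_K,\lambda_1,\ldots,\lambda_K)$. Applying $P$ to the block lower triangular matrix on the left-hand side of \eqref{before rearrangement} and using the explicit forms of $A_k$ and $B_{kk'}$ from \eqref{general define}, one finds that conjugation by $P$ rearranges that matrix into exactly the matrix being inverted in the definition of $\Sigma$: the $Q_k$-blocks collect on the diagonal of the top-left block, the $\Pi_{k'k}^T$ (for $k'\le k$) collect in the top-right block, and the $\Pi_{kk'}$ (for $k'\le k$) collect in the bottom-left block. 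The same conjugation applied to the strictly upper block-triangular matrix on the right-hand side of \eqref{before rearrangement} produces the second matrix appearing in $\Sigma$. Consequently $\Sigma=PSP^T$, and similarity yields $\rho(\Sigma)=\rho(S)$, so the eigenvalue condition in the proposition is equivalent to $\rho(S)<1$, completing the argument.

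The main obstacle I anticipate is purely bookkeeping: one must carefully verify that the block-triangular patterns produced by conjugation with $P$ match those written out in the statement of the proposition, including the signs (note that $-B_{kk'}$ in \eqref{before rearrangement} carries the minus sign into the positive off-diagonal blocks of the inverted matrix, while the $B_{kk'}$ on the right retains the negative sign, which is consistent with the $-\Pi_{k'k}^T$ and $-\Pi_{kk'}$ entries in the second factor of $\Sigma$). No delicate analytic work is required beyond recognizing this block-permutation similarity and invoking the standard Neumann-series/spectral-radius convergence result.
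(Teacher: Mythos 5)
Your proposal is correct and follows essentially the same route as the paper: recognize $\Sigma$ as a permutation (hence similarity transformation) of the iteration matrix $S$, invoke the Neumann-series/spectral-radius argument to get $w^\ell\to(I-S)^{-1}r$, and identify this fixed point with the unique solution of the lifted problem and hence of \eqref{general}. Your write-up is in fact somewhat more explicit than the paper's about how the permutation to the ``all-primal-then-all-dual'' ordering produces the two block factors of $\Sigma$, but the substance is identical.
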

\begin{proof}
Matrix $\Sigma$ is a permutation of matrix $S$. The permutation is a similarity transformation and thus $S$ has the same set of eigenvalues as $\Sigma$. Consequently, all the non-zero eigenvalues of $S$ have a magnitude of less than one. This implies that all the eigenvalues of $S^\ell$ decay exponentially as $\ell\rightarrow \infty$. Furthermore, this indicates that the series $I+S+S^2+\cdots$ converges to $(I-S)^{-1}$. Note that $I-S$ is invertible since all the eigenvalue of $S$ have magnitude less than one. Accordingly, the solution of the scheme \eqref{general opt iter} satisfies $\lim_{\ell \rightarrow\infty}{w}^\ell= (I-S)^{-1}{r}$. This convergence value $(I-S)^{-1}{r}$ satisfies equation  ${{w}}=S{w}+{r}$. Note that this is a unique solution to equation ${{w}}=S{w}+{r}$ and thus solves \eqref{partition2}. Problem \eqref{partition2} has same solution of problem \eqref{general}, and both problems have unique solutions. Thus, the solution of the GS scheme \eqref{general opt iter} converges to the solution of \eqref{general}.  $\Box$
\end{proof}

Convergence is achieved without any assumptions on the initial guess ${w}^0$. The error between the solution of problem \eqref{general} and the solution of the $\ell$-th coordination update of the GS scheme \eqref{general iter} (i.e., $(I-S)^{-1}{r}-{w}^\ell$) is given by ${\epsilon}^\ell=S^\ell\left((I-S)^{-1}{r}-{w}^0\right)$. We will call $\Vert\epsilon^\ell\Vert$ the {\em error} of the $\ell$-th coordination step. If we express the initial error term as $\Vert\epsilon^0\Vert$, we can write $\Vert\epsilon^\ell\Vert=\Vert S^\ell\epsilon^0\Vert$. Moreover, if we define $\rho(S)=\lambda_{max}(S)$ then, for any $\delta>0$, there exists $\kappa>0$ such that the bound $\Vert S^\ell \Vert \leq \kappa \vert \rho(S) +\delta \vert^\ell$ holds for all $\ell$ and where $\Vert S \Vert$ is a matrix norm of $S$. Using this inequality we can establish that $\Vert\epsilon^\ell\Vert \leq \kappa \vert \rho(S)+\delta \vert^\ell \Vert \epsilon^0 \Vert$. Since we can choose $\delta$ to be arbitrarily small, we can see that the decaying rate of the error is $O\left(\rho(S)^\ell \right)$. Although the error decays exponentially we note that, if $\rho(S)$ is close to one, convergence can be slow and thus having a good initial guess ${w}^0$ is essential.  We also note that $\rho(S)$ is {\em tightly related to the topology of the coupling} between partitions, indicating that the partition structure contributes to the convergence rate. 

We highlight that the GS concepts discussed here focus on problems with no inequality constraints. In practice,  however, GS schemes can also be applied to such problems by using projected GS schemes \cite{zavalamultigrid,borzi2005multigrid,zavala2010real}. 

\vspace{-0.2in}\subsection{Coordination Orders}
In the lexicographic coordination order proposed in \eqref{general iter}, we use the update sequence $k=1,2,\cdots,K$. We note that the order affects the structure of the matrix $\Sigma$ and thus convergence can be affected as well. To see this, consider a new order given by the sequence $\sigma(1),\sigma(2),\cdots,\sigma(K)$  where $\sigma:\{1,2,\cdots,K\}\rightarrow\{1,2,\cdots,K\}$ is a bijective mapping. We can use this mapping to rearrange the partition variables as:
\begin{equation}
[z_1,
\lambda_1,
z_2,
\lambda_2,
\dots
z_K,
\lambda_K]
\rightarrow
[z_{\sigma(1)},
\lambda_{\sigma(1)},
z_{\sigma(2)},
\lambda_{\sigma(2)},
\dots
z_{\sigma(K)},
\lambda_{\sigma(K)}].
\end{equation}
This gives the reordered matrix: 
\begin{gather}\label{rearranged S}
S=
\begin{bmatrix}
A_{\sigma(1)}&&&\\
-B_{\sigma(2)\sigma(1)}&A_{\sigma(2)}&& \\
\vdots&\ddots&\ddots&\\
-B_{\sigma(K)\sigma(1)}&\cdots&-B_{\sigma(K)\sigma(K-1)}&A_{\sigma(K)}
\end{bmatrix}^{-1}
\begin{bmatrix}
\quad&B_{\sigma(1)\sigma(2)}&\cdots&B_{\sigma(1)\sigma(K)}\\
\quad&& \ddots&\vdots \\
\quad&&&B_{\sigma(K-1)\sigma(K)}\\
\quad
\end{bmatrix}
\end{gather}
Importantly, the {\em change in update order is not necessarily a similarity transformation} of matrix $S$ and thus the eigenvalues will be altered.  The GS scheme converges as long as eigenvalues of the reordered matrix $S$ have magnitude less than one but the convergence rate will be affected. Interestingly, GS schemes are highly flexible and allow for a large number of update orders. For instance, in some cases one can derive ordering sequences that enable parallelization. As an example, the 1-D spatial problem has a special structure with $B_{kk'}$=0 for any $(k,k')$ such that $\vert k-k' \vert \geq 2$. The GS scheme becomes:
\begin{align}\label{order change iter}
A_k{x}_k^{\ell+1}&={b}_k+B_{kk-1}{x}_{k-1}^{\ell+1}+B_{kk+1}{x}_{k+1}^\ell
\end{align}

Instead, we consider the following ordering $\sigma(i)=2i-1$ for $1\leq i \leq \frac{K}{2}$ and $\sigma(i)=2i-K$ for $\frac{K}{2}+1\leq i \leq K$. Here we assume that the number of partitions is even. This is a called a {\em red-black ordering} and is widely popular in the solution of PDEs. By changing the index using $\sigma(\cdot)$, we can express \eqref{order change iter} as: 

\begin{footnotesize}
\begin{subequations}\label{order change iter sigma a}
\begin{align}
A_{\sigma(i)}{x}_{\sigma(i)}^{\ell+1}&={b}_{\sigma(i)}+B_{{\sigma(i)}{\sigma(i+\frac{K}{2})}}{x}_{\sigma(i+\frac{K}{2})}^\ell,\quad i=1\\
A_{\sigma(i)}{x}_{\sigma(i)}^{\ell+1}&={b}_{\sigma(i)}+B_{{\sigma(i+\frac{K}{2}-1)}{\sigma(i+\frac{K}{2}-1)}}{x}_{\sigma(i+\frac{K}{2}-1)}^{\ell}+B_{{\sigma(i)}{\sigma(i+\frac{K}{2})}}{x}_{\sigma(i+\frac{K}{2})}^\ell,\quad 2\leq i \leq \frac{K}{2}
\end{align}
\end{subequations}
and
\begin{subequations}\label{order change iter sigma b}
\begin{align}
A_{\sigma(i)}{x}_{\sigma(i)}^{\ell+1}&={b}_{\sigma(i)}+B_{{\sigma(i)}{\sigma(i-\frac{K}{2})}}{x}_{\sigma(i-\frac{K}{2})}^{\ell+1}+B_{{\sigma(i)}{\sigma(i-\frac{K}{2}+1)}}{x}_{\sigma(i-\frac{K}{2}+1)}^{\ell+1},\quad \frac{K}{2}+1\leq i \leq K-1.\\
A_{\sigma(i)}{x}_{\sigma(i)}^{\ell+1}&={b}_{\sigma(i)}+B_{{\sigma(i)}{\sigma(i-\frac{K}{2})}}{x}_{\sigma(i-\frac{K}{2})}^{\ell+1},\quad i =K.
\end{align}
\end{subequations}
\end{footnotesize}
We can thus see that the solution of \eqref{order change iter sigma a} can proceed {\em independently} for any $1\leq i \leq \frac{K}{2}$. This is because these partitions only depend on the solutions of \eqref{order change iter sigma b} but not on the solutions of \eqref{order change iter sigma a}. Likewise, solving \eqref{order change iter sigma b} can be done independently for any $\frac{K}{2}+1\leq i \leq K$. Red-black ordering thus enables parallelism.  Different coordination orders for 1-D and 2-D meshes are presented in Figures \ref{1D order} and \ref{2D order}.  

\begin{figure}[!htb]
\begin{center}
\includegraphics[width=2in]{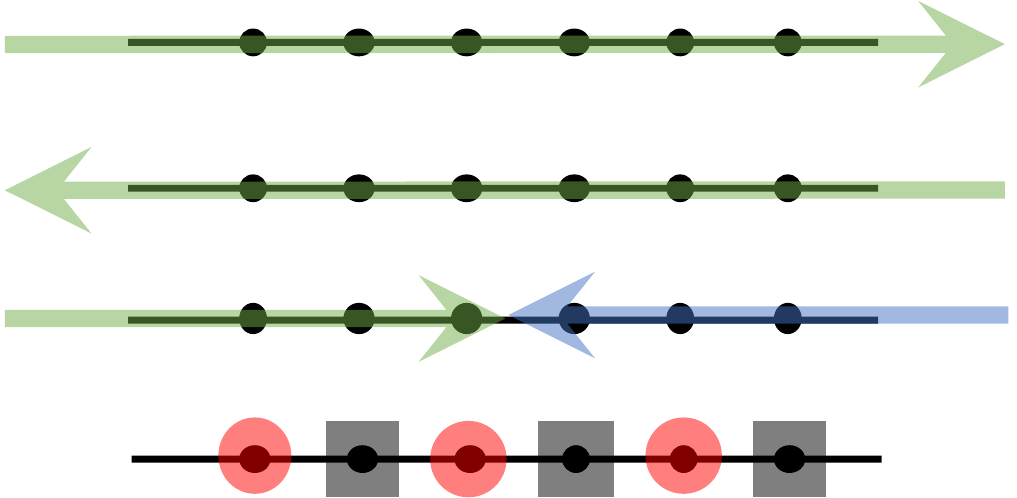}\caption{Sketch of 1-D ordering methods.}\label{ordering scheme temporal}\label{1D order} \vspace{0.1in}
\includegraphics[width=4.5in]{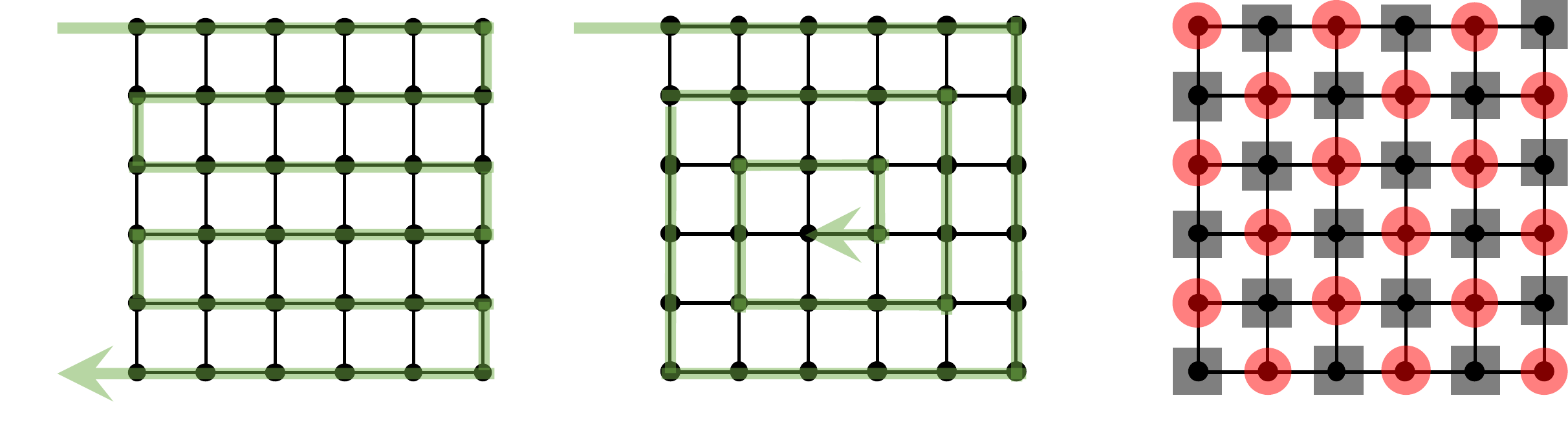}\caption{Sketch of 2-D ordering methods.}\label{ordering scheme spatial}\label{2D order}
\end{center}
\end{figure}

\vspace{-0.2in}\subsection{Coarsening}

Low-complexity coarse versions of the full-resolution problem \eqref{multi-scale control} can be solved to obtain an initial guess for the GS scheme and with this accelerate coordination.  To illustrate how this can be done, we use the following representation of \eqref{multi-scale control}: 
\begin{subequations}\label{coarsening original}
\begin{align}
\min_{{z}}\quad & \frac{1}{2}{z}^T Q {z}-{c}^T{z} \\
\text{s.t.}\quad & \underbrace{\begin{bmatrix} A \\ \Pi \end{bmatrix}}_{\bar{A}}{z}+\underbrace{\begin{bmatrix}B \\ 0 \end{bmatrix}}_{\bar{B}} d=0 \quad (\nu, {\lambda})
\label{eq:cons}
\end{align}
\end{subequations}
Our goal is to obtain a substantial reduction in the dimension of this problem by introducing a {\em mapping} from a coarse variable space to the original space. This is represented by the linear mapping ${z}=T\tilde{{z}}$  where  $\tilde{{z}}\in \mathbb{R}^{N_c\cdot n_z}$ is the coarse variable and we assume that the mapping $T\in \mathbb{R}^{N\cdot n_z\times N_c\cdot n_z}$ (called a restriction operator) has full column rank and $N_c<N$. We can thus pose the low-dimensional coarse problem: 
\begin{subequations}\label{coarse problem}
\begin{align}
\min_{{z}}\quad & \frac{1}{2}{\tilde{z}}^T T^TQT {\tilde{z}}-{c}^TT{\tilde{z}} \\
\text{s.t.}\quad & U^T\bar{A} T{\tilde{z}}+U^T\bar{B} d=0 \quad (\tilde{\nu},\tilde{\lambda})
\end{align}
\end{subequations}
A key issue that arises in coarsening is that the columns of matrix $\bar{A} T$ {\em do not necessarily span the entire range space of $\bar{B}$} (i.e., we might not be able to find a coarse variable $\tilde{z}$ that satisfies $\bar{A} T{\tilde{z}}+\bar{B}d=0$). Consequently,  we introduce a {\em constraint aggregation} matrix $U$ that has full column rank to ensure that $U^T\bar{A}T$ spans the range space of $U^T\bar{B}$. With this, we can ensure that the feasible set of  \eqref{coarse problem} is non-empty. 

After solving the coarse problem \eqref{coarse problem}, we can project the primal-dual solution from the coarse space to the original space. We note that the dimension of the dual  space is also reduced because we performed constraint aggregation. The projection for the primal solution can be done by using ${z}=T\tilde{{z}}$ while the projection for the dual solution can be obtained as $(\nu,\lambda)=U(\tilde{\nu},{\tilde{\lambda}})$.  The derivation of coarsened representations is {\em application-dependent} and often requires domain-specific knowledge. In particular, coarsening can also be performed by using reduced order modeling techniques such as proper orthogonal decompositions \cite{antoulas} or coherency-based network aggregation schemes \cite{kokotovic1982coherency}. In the following sections we demonstrate how to derive coarse representations in certain settings.

\vspace{-0.2in}\subsection{Multi-Grid Schemes and Hierarchical Coordination}

Multi-grid serves as a bridge between a fully centralized and a fully decentralized coordination schemes. In particular, a fully centralized scheme would aim to find a solution of the full-resolution problem \eqref{lifted problem} by gathering all the information in a single processing unit. A fully decentralized scheme such as GS, on the other hand, would proceed by finding solutions to subproblems \eqref{iterative scheme} over each partition and information would only be shared between the connected partitions through the coordination variables.  A drawback of a decentralized scheme is that a potentially large number of coordination steps might be needed to reach a solution for the full-resolution problem, particularly when many partitions are present.   

In a multi-grid scheme, we seek to aid the decentralized scheme by using information from a low-resolution central problem that oversees the entire domain. In our context, the information is in the form of states and dual variables defined over the partition interfaces (i.e., the coupling variables and constraints). The key idea of this hierarchical arrangement is that the coarse central scheme can capture effects occurring at low global frequencies while the agents in the decentralized schemes can handle effects occurring at high local frequencies. As can be seen, multi-grid provides a framework to  design hierarchical control architectures by leveraging existing and powerful reduced order modeling techniques such as coherency-based aggregation and decentralized control schemes.   

Multi-grid is a widely studied computational paradigm. The framework proposed here presents basic elements of this paradigm but diverse extensions are possible \cite{borzi,brandtalgebraic}. For instance, the scheme proposed here involves only a coarse and a fine resolution level but one can create a multi-level schemes that transfer information between multiple scales recursively by using meshes of diverse resolution. This can allow us to cover a wider range of frequencies present in the system. In the following section we illustrate how sequential coarsening can be beneficial.  

From an {\em electricity markets} perspective, we highlight that multi-grid schemes provide a framework to coordinate transactions at multiple spatial and temporal scales. To see this, consider the case of spatial coordination of electricity markets. Under such setting, we can interpret each partition of the spatial domain as a market player (e.g., a microgrid). The market players have internal resources (e.g., distributed energy resources) that they manage to satisfy their internal load. The players, however, can also transact energy with other players in order to improve their economic performance. The proposed GS scheme provides a mechanism to handle intra-partition decision-making (by solving the partition subproblems) and inter-partition transactions by exchanging state (voltages) and dual information (nodal prices). If transaction information is exchanged multiple times (corresponding to multiple GS iterates), the GS scheme will converge to an equilibrium point corresponding to the solution of the centralized economic maximization problem (e.g., the social welfare problem). This is a useful property of decentralized coordination because centralization of information and decision-making is often impractical. If the players only exchange information once (or a handful of times) they might not reach an optimal equilibrium and an inefficiency will be introduced.  Moreover, when a disturbance affects the system, many GS iterations might be needed to reach the new optimal equilibrium. This is where hierarchical optimization becomes beneficial, because one can solve and aggregated spatial representation of the system (in which each partition is treated as a node) to compute approximate dual variables and states at the interfaces of the partitions. This approximation can be used to aid the convergence of the decentralized GS scheme (by conveying {\em global} spatial information to {\em local} market players). One can think of the coarse high-level problem as a system operator (supervisor) problem (e.g., at the distribution level). The operator might, at the same time, need to coordinate with other system operators (each of which oversees its own set of market players). These system operators can at the same time be aggregated into a higher level which would represent, for instance, a transmission or regional operator. We can thus see that hierarchical multi-grid schemes enable scalable coordination of potentially large number of market players over large geographical regions.  The hierarchical multi-grid scheme can also be applied to handle multiple timescales of a single market player that might need to manage, for instance, assets with different dynamic characteristics.  

\vspace{-0.2in}\section{Case Studies}\label{sec:cases}
We now present numerical case studies to demonstrate the concepts in the context of temporal and spatial management of energy systems. We use a multi-scale (in time) optimization problem with features of an storage management problem and a multi-scale (in space) optimization problem that considers power flow dispatch over a network. 

\vspace{-0.2in}\subsection{Multi-Scale Temporal Control}
We use a multi-grid scheme to solve the following temporal planning problem $\mathcal{P}$:
\begin{subequations}\label{temporal problem}
\begin{align}
\min_{{x},{u}}\quad & \sum_{i\in\mathcal{N}}(x(i)^2+{u(i)}^2) \\
\label{dynamic equation}\text{s.t.}\quad  & x(i+1)=x(i)+\delta (u(i+1)+d(i+1)),\quad i\in\mathcal{N} \\
&\label{boundary constriants}x(0)=0.
\end{align}
\end{subequations}
This problem has a state, a control, and a disturbance defined over $N=M\cdot K$ time points contained in the set $\mathcal{N}$. The state and control are grouped into the decision variable $z(i)=(x(i),u(i))$. The distance between mesh points is given by $\delta$. The structure of this problem resembles that of an inventory (storage) problem in which the disturbance $d(i)$ is a load and $u(i)$ is a charge/discharge flow from the storage. In these types of problems, the load might have frequencies covering multiple timescales (e.g., seasonal, daily, and down to seconds). Consequently, the time mesh $\delta$ has to be rather fine to capture all the frequencies. Moreover, the planning horizon (the time domain $N\cdot \delta$) might need to be long so as to capture the low frequency components in the load.   We partition the problem into $K$ partitions, each containing $M$ points. The set of inner points in the partition is defined as $\mathcal{M}$. The optimization problem over a partition $k$ solved in the GS scheme is given by:
\begin{subequations}\label{temporal GS problem}
\begin{align}
\min_{{x}_k,u_k}\quad & \sum_{i\in\mathcal{M}}\left(x_k(i)^2+{u_k(i)}^2\right)+x_k(M)\lambda_{k+1}^{\ell} \\
\text{s.t.}\quad  & x_k(i+1)=x_k(i)+\delta (u_k(i+1)+d_k(i+1)),\quad i\in\mathcal{M}\\
&x_k(0)=x_{k-1}^{\ell+1}(M) \quad (\lambda_k).
\end{align}
\end{subequations}
In our numerical experiments, we set $K=10$ and $M=100$ to give $N=1,000$ points. The time mesh points were set $t(i)=i\cdot \delta$ with $\delta=0.1$.  We use a disturbance signal composed of a low and a high frequency $d(i)={4\sin \left(\frac{4\pi i}{N}\right)}+{\sin\left(\frac{24\pi i}{N}\right)},\, i\in\mathcal{N}$. The disturbance signal and its frequency components are shown in Figure \ref{temporal disturbance figure}. 

\begin{figure}[!htb]
\begin{center}
\includegraphics[width=4in]{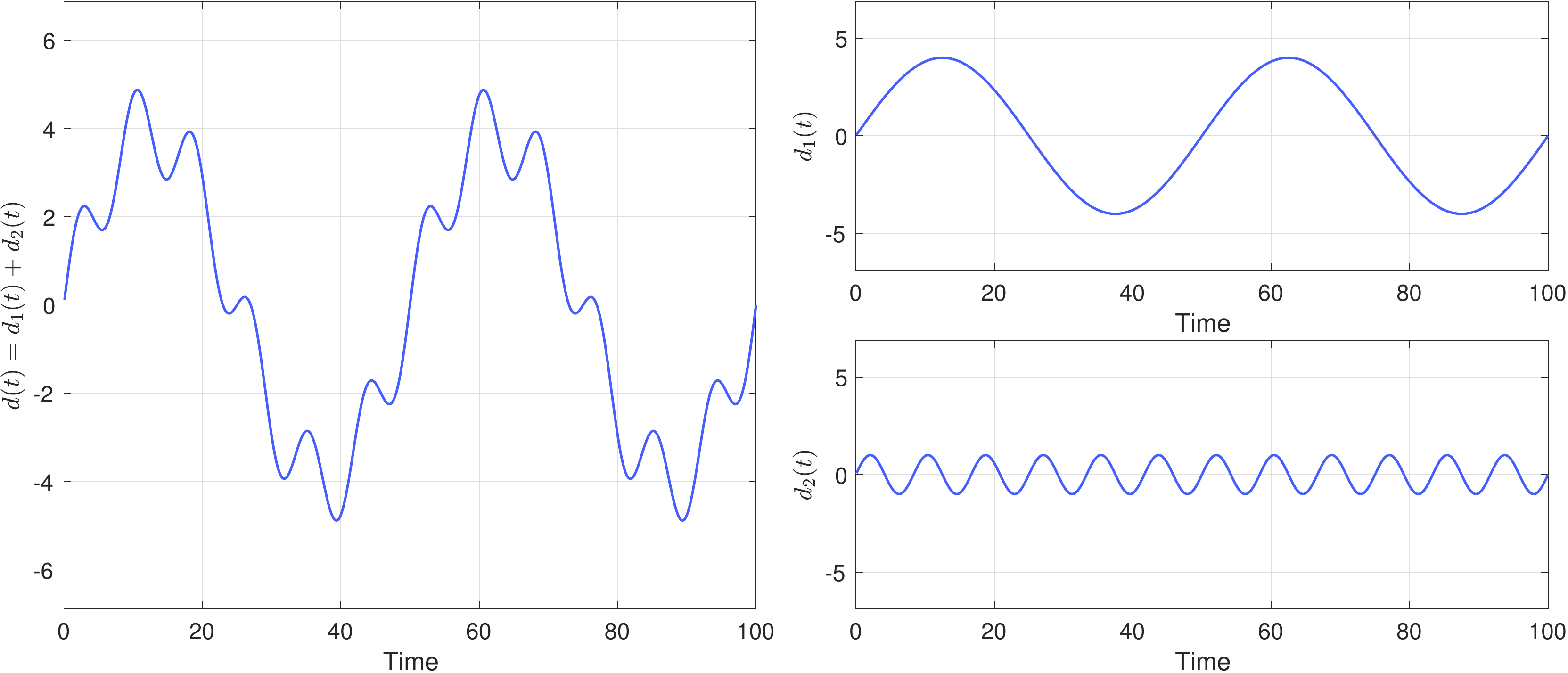}\caption{Disturbance profile (left) and frequency components (right) for temporal problem.}\label{temporal disturbance figure}
\end{center}
\end{figure}

We solve a coarse problem to create a hierarchical structure that aids the GS scheme (which operates on the partitions at high resolution). To perform coarsening, we aggregate $M_c=4$ internal grid points (i.e., collapse 25 points into one coarse point). The set of inner coarse points is $\mathcal{M}_c$. The projection is ${x}_k=T_x\tilde{{x}}_k$ and ${u}_k=T_u\tilde{{u}}_k$ with:

\begin{subequations}\label{coarsening scheme temporal}
\begin{gather}
T_x=T_u=\begin{bmatrix}
1\\
&1 \\
&\vdots \\
&1 \\
&&\ddots\\
&&&1\\
&&&\vdots\\
&&&1
\end{bmatrix}
\end{gather}
\end{subequations}
The projection matrices $T_x$ and $T_u$ have full column rank. The fine to coarse restriction can also be expressed as:
\begin{align}
\tilde{x}_k(\tilde{i})=x_k(i),\quad
\tilde{u}_k(\tilde{i})=u_k(i),\quad
\text{if }\quad \tilde{i}=\lfloor{\frac{i-1}{M/M_c}}\rfloor+1
\end{align}
where $\floor{\cdot}$ is a round-down operator. We denote $\varphi(i)=\lfloor{\frac{i-1}{M/M_c}}\rfloor+1$. The coarse problem can then be stated in terms of the coarse variables as follows:
\begin{subequations}\label{temporal coarse problem}
\begin{align}
\min_{{\tilde{x}}_{k},\tilde{u}_k}\quad & \sum_{j\in\mathcal{M}_c}\left(\tilde{x}_{k}(j)^2+{\tilde{u}_k(j)}^2\right)+\tilde{x}_k(M_c){\lambda}_{k+1}^\ell \\
\label{dmc}\text{s.t.}\quad  & \tilde{x}_k(j+1)=\tilde{x}_k(j)+\frac{M}{M_c}\delta (\tilde{u}_k(j+1)+\tilde{d}_k(j+1)),\quad j\in \mathcal{M}_c\\
&\tilde{x}_k(0)=\tilde{x}_{k-1}^{\ell+1}(M_c)\quad (\lambda_k),
\end{align}
\end{subequations}
where $\tilde{d}_k(j)=\frac{M_c}{M}\sum_{i\in\varphi^{-1}(\tilde{j})} d_k(i)$ is the coarsened disturbance signal.  The dynamic equations are defined over a smaller dimensional space (defined over $\mathcal{M}_c$) which results from aggregating the dynamic equations in the full resolution problem (defined over $\mathcal{M}$).  We solve the full resolution problem \eqref{temporal problem} and compare its solution against that of the pure GS scheme, the one of the coarse low resolution problem, and the one of the hierarchical scheme that solves the coarse problem to coordinate the GS scheme.  We also solve the coarse problem by using a GS scheme.  Figure \ref{temporal solution figure} shows the results. We note that the solution to the coarse problem \eqref{temporal GS problem} captures the general long-term trend of the solution but misses the high frequencies. The GS scheme refines this solution and converges in around 30 coordination steps. By comparing Figure \ref{temporal solution figure} (top-right) and (bottom-right), we can see that initializing GS scheme with the coarse solution significantly reduces the initial error, and demonstrates the benefit of the hierarchical scheme.

\begin{figure}[!htb]
\begin{center}
\includegraphics[width=4.0in]{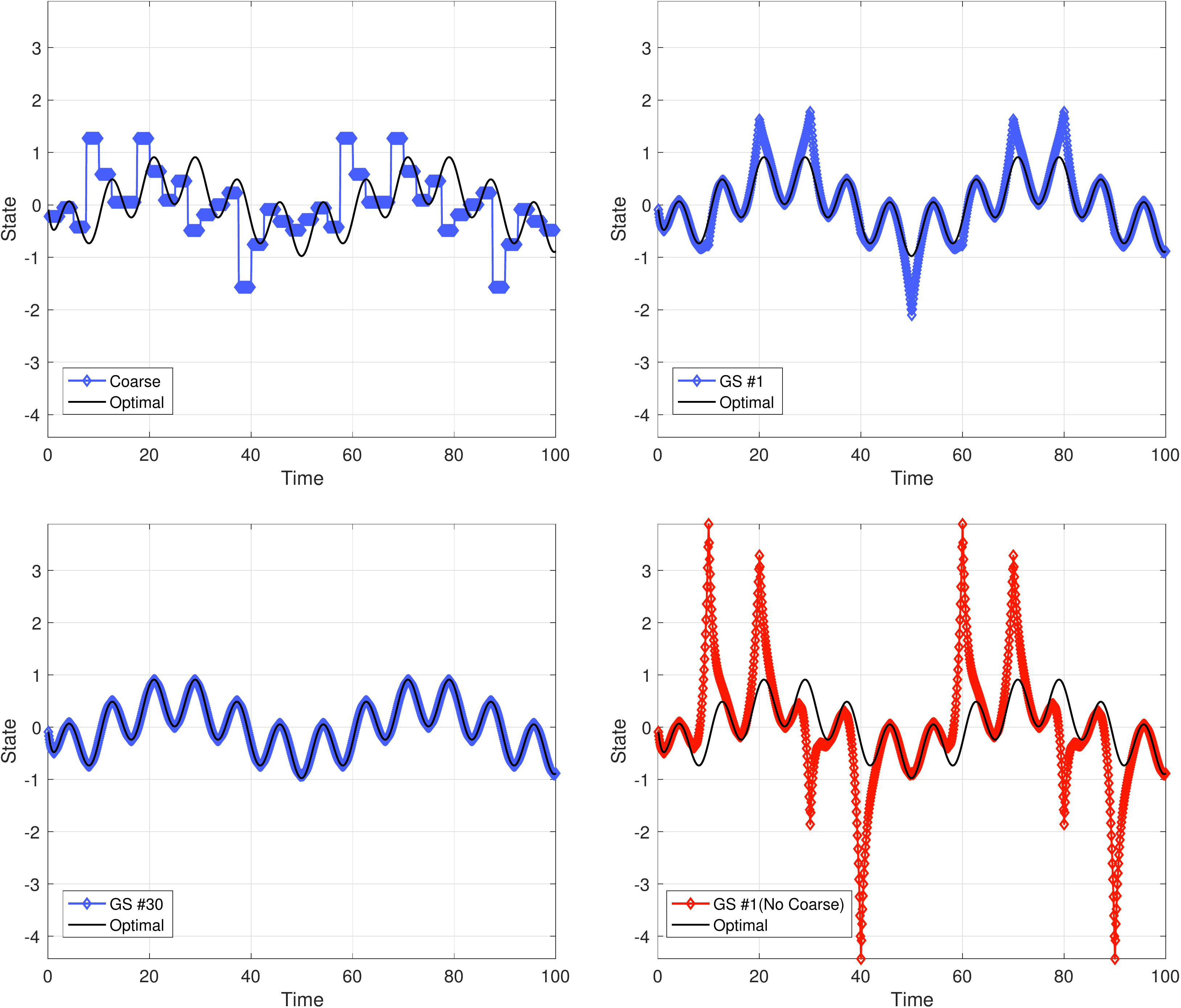}\caption{(Top-left) solution of coarse problem, (top-right) solution of first GS step with coarsening, (bottom-left) solution of  30th GS step with coarsening, (bottom-right) solution of first GS step without coarsening.}\label{temporal solution figure}
\end{center}
\end{figure}

\vspace{-0.5in}\subsection{Multi-Scale Spatial Control}

Useful insights on how to use multi-grid schemes to create hierarchical network control structures result from {\em interpreting network flows as diffusive processes}. To illustrate this, we consider a network with nodes defined on a rectangular mesh. A node $(i,j)$ in the network exchanges flows with its four neighboring nodes $(i,j+1),(i,j-1),(i+1,j),(i-1,j)$ (this is called a stencil). The flows $f({i,j})$ are a function of the node potentials  $p({i,j})$ and given by:
\begin{subequations}
\begin{align}
f({i,j;i,j+1})&=D(p({i,j})-p({i,j+1}))\\
f({i,j;i,j-1})&=D(p({i,j})-p({i,j-1}))\\
f({i,j;i+1,j})&=D(p({i,j})-p({i+1,j}))\\
f({i,j;i-1,j})&=D(p({i,j})-p({i-1,j})).
\end{align}
\end{subequations}
Here, $D\in \mathbb{R}$ is the diffusion constant (i.e., the flow resistance) of the link connecting the nodes. At each node $(i,j)$ we have a load $d(i,j)$ and a source $u(i,j)$ that is used to counteract (balance) the load. This gives the flow balance conservation equation:
\begin{align*}
f({i,j;i,j+1})+f({i,j;i,j-1})+f({i,j;i+1,j})+f({i,j;i-1,j})=u(i,j)+d(i,j).
\end{align*}
This can also be written in terms of the potentials as:
\begin{align*}
D\left(4\cdot p(i,j)-p(i-1,j)-p(i+1,j)-p(i,j-1)-p(i,j+1)\right)=u(i,j)+d(i,j).
\end{align*}

\begin{figure}[!htb]
\begin{center}
\includegraphics[width=4.5in]{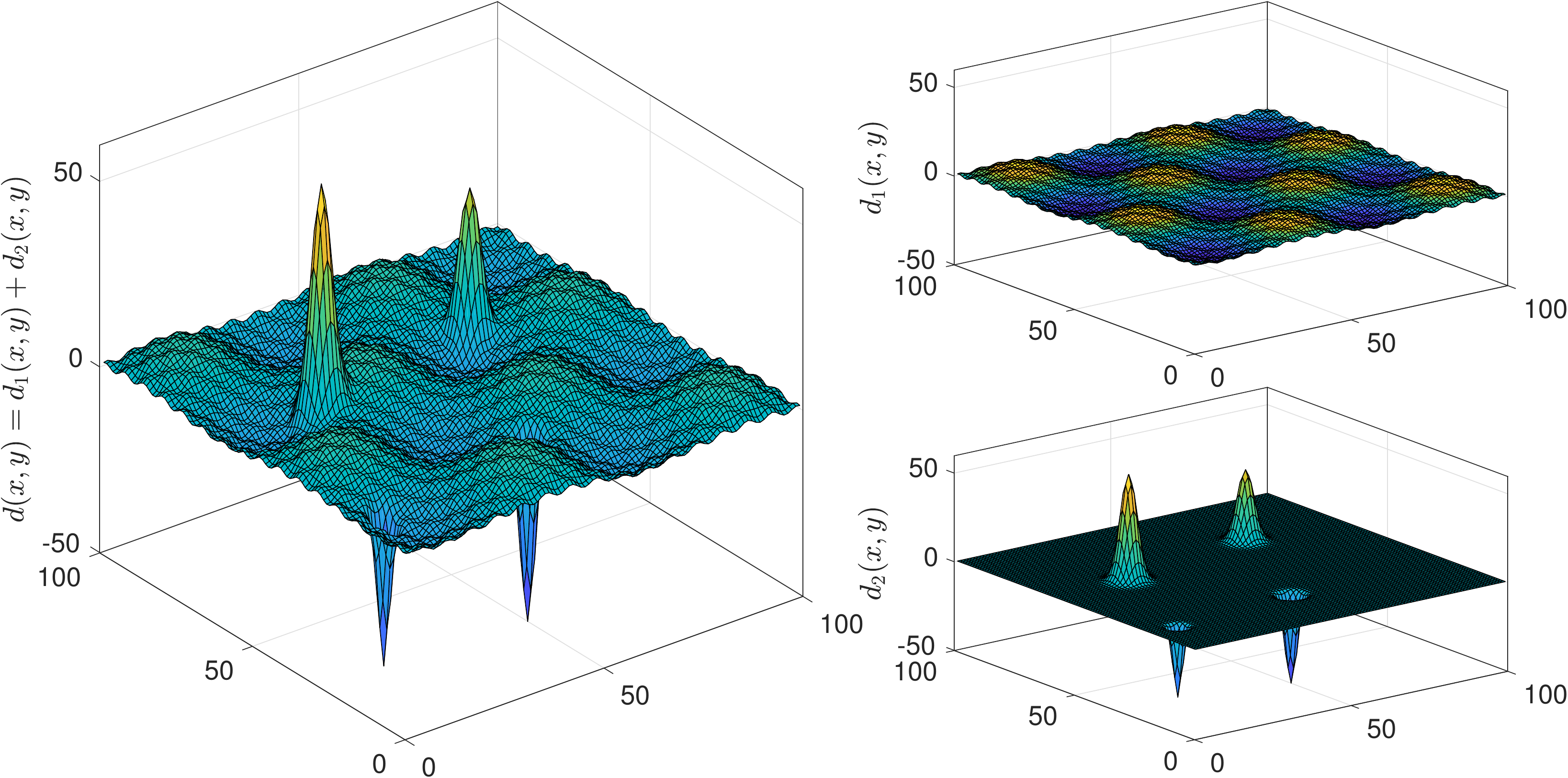}\caption{Disturbance field (left) and its components (right) for spatial optimization problem.}\label{spatial disturbance figure}
\end{center}
\end{figure}

We assume that we have fixed 2-D spatial domain $\Omega:=[0,X]\times [0,Y]$ that is discretized using $M\cdot P$ nodes in each direction. The sets $\mathcal{N}^x=\mathcal{N}^y:=\{1,2,\cdots,P\cdot M\}$ are the sets of points in each direction. The set of total mesh points is $\mathcal{N}=\mathcal{N}^x\times\mathcal{N}^y$ and thus $N=(P\cdot M)\cdot (P\cdot M)$. As the number of nodes increases, the node potentials form a continuum described by the 2-D diffusion equation:
\begin{align}
D\left(\frac{\partial^2 p(x,y)}{\partial x^2}+\frac{\partial^2 p(x,y)}{\partial y^2}\right)=u(x,y)+d(x,y),\quad (x,y)\in \Omega.  
\end{align}
Using this analogy, we consider the following full-space problem:
\begin{subequations}\label{spatial problem}
\begin{align}
\min_{{p},{u}} \quad&\sum_{(i,j)\in \mathcal{N}}\left(p(i,j)^2 +u(i,j)^2\right) \\
\text{s.t.}\quad & D\left(4\cdot p(i,j)-p(i-1,j)-p(i+1,j)-p(i,j-1)-p(i,j+1)\right)\nonumber \\
 &\qquad\qquad =u(i,j)+d(i,j),\quad (i,j)\in\mathcal{N}\\
& p(0,j)=0,\quad j\in\mathcal{N}^y\\
& p(M\cdot P+1,j)=0,\quad j\in\mathcal{N}^y\\
& p(i,0)=0,\quad i\in\mathcal{N}^x\\
& p(i,M\cdot P+1)=0,\quad i\in\mathcal{N}^x
\end{align}
\end{subequations}
The goal of the optimization problem is, given the loads $d(i,j)$, to control the potentials in the network nodes $p(i,j)$ by using the sources $u(i,j)$. The decision variables at every node are $z(i,j)=(p(i,j),u(i,j))$.  The presence of multiple frequencies in the {\em 2-D disturbance load field} $d(i,j)$ might require us to consider fine meshes, making the optimization problem intractable. One can think of the {\em disturbance field} as spatial variations of electrical loads observed over a geographical region.  If the loads have high frequency spatial variations, it would imply that we need high control resolution (i.e., we need sources at every node in the network to achieve tight control).  This can be achieved, for instance, by installing distributed energy resources (DERs). Moreover, if the load has low-frequency variations, it would imply that the DERs would have to cooperate to counteract global variations. 

In our experiments, the size of mesh was set $P=10$ and $M=10$, which results in $N=10,000$ mesh points. To address this complexity, we partition the 2-D domain into $K=P\cdot P$ partitions each with $M\cdot M$ points and we label each element in the partition as $k=(n,m)\in \mathcal{K}$. We can think of each partition $k\in \mathcal{K}$ as a region of the network. We define inner index sets by: $\mathcal{M}^x=\mathcal{M}^y:=\{1,2,\cdots,M\}$ and $\mathcal{M}:=\mathcal{M}^x \times \mathcal{M}^y$. The GS scheme for partition $(n,m)$ is given by:
\begin{subequations}\label{spatial GS problem}
\begin{align}
\min_{p_{n,m},{u}_{n,m}} \quad&\sum_{(i,j)\in \mathcal{M}}\left(p_{n,m}(i,j)^2 +u_{n,m}(i,j)^2\right) \\
\nonumber&
+\sum_{j\in\mathcal{M}^y} p_{n,m}(1,j)\lambda_{n-1,m}^{\ell+1}(M+1,j)
+\sum_{j\in\mathcal{M}^y} p_{n,m}(M,j)\lambda_{n+1,m}^{\ell}(0,j)\\
\nonumber&
+\sum_{i\in\mathcal{M}^x} p_{n,m}(i,1)\lambda_{n,m-1}^{\ell+1}(i,M+1)
+\sum_{i\in\mathcal{M}^x} p_{n,m}(i,M)\lambda_{n,m+1}^{\ell}(i,0)
\\
\text{s.t.}\quad & D\left(4\cdot p_{n,m} (i,j)-p_{n,m} (i-1,j)-p_{n,m} (i+1,j)-p_{n,m} (i,j-1)-p_{n,m} (i,j+1)\right)\nonumber\\
&\qquad\qquad=u_{n,m}(i,j)+d_{n,m}(i,j),\quad(i,j)\in \mathcal{M}\\
& p_{n,m}(0,j)=p_{n-1,m}^{\ell+1}(M,j),\quad\quad\;\; (\lambda_{n,m}(0,j)) \label{eq:eq1}\\
& p_{n,m}(M+1,j)=p_{n+1,m}^{\ell}(1,j),\quad (\lambda_{n,m}(M+1,j)) \\
& p_{n,m}(i,0)=p_{n,m-1}^{\ell+1}(i,M),\quad\qquad (\lambda_{n,m}(i,0)) \\
& p_{n,m}(i,M+1)=p_{n,m+1}^{\ell}(i,1), \quad\;\; (\lambda_{n,m}(i,M+1)),\quad \label{eq:eq4} \end{align}
\end{subequations}
The constraint indices for the constraints \eqref{eq:eq1}-\eqref{eq:eq4} run over $j\in\mathcal{M}^y$ and $ i\in\mathcal{M}^x$.

To perform coarsening, a mesh of $(M/M_c)\cdot(M/M_c)$ points is collapsed into a single coarse point and the mapping from the coarse space to the original space is:
\begin{equation}
\tilde{p}_{n,m}(\tilde{i},\tilde{j})=p_{n,m}(i,j)\quad\text{if  }\quad \tilde{i}=\lfloor\frac{i-1}{M/M_c}\rfloor+1,\quad \tilde{j}=\lfloor\frac{j-1}{M/M_c}\rfloor+1. 
\end{equation}
As with the temporal case, we also perform aggregation of the constraints in the partition to obtain a coarse representations. In our experiments, we used $M_c=2$ as default. The disturbance field is given by a linear combination of a 2-D sinusoidal and of a Gaussian function. The shape of the load field illustrated in Figure \ref{spatial disturbance figure}.  Figure \ref{spatial result} shows the optimal potential field obtained with the coarse problem and that obtained with the GS scheme at the first and tenth steps (initialized with the coarse field). Note that the coarse field error captures the global structure of the load field but misses the high frequencies, while the GS scheme corrects the high-frequency load imbalances.  In Figure \ref{fig:spatial} we again illustrate that the hierarchical scheme outperforms the decentralized GS scheme. 

\begin{figure}[!htb]
\begin{center}
\includegraphics[width=5in]{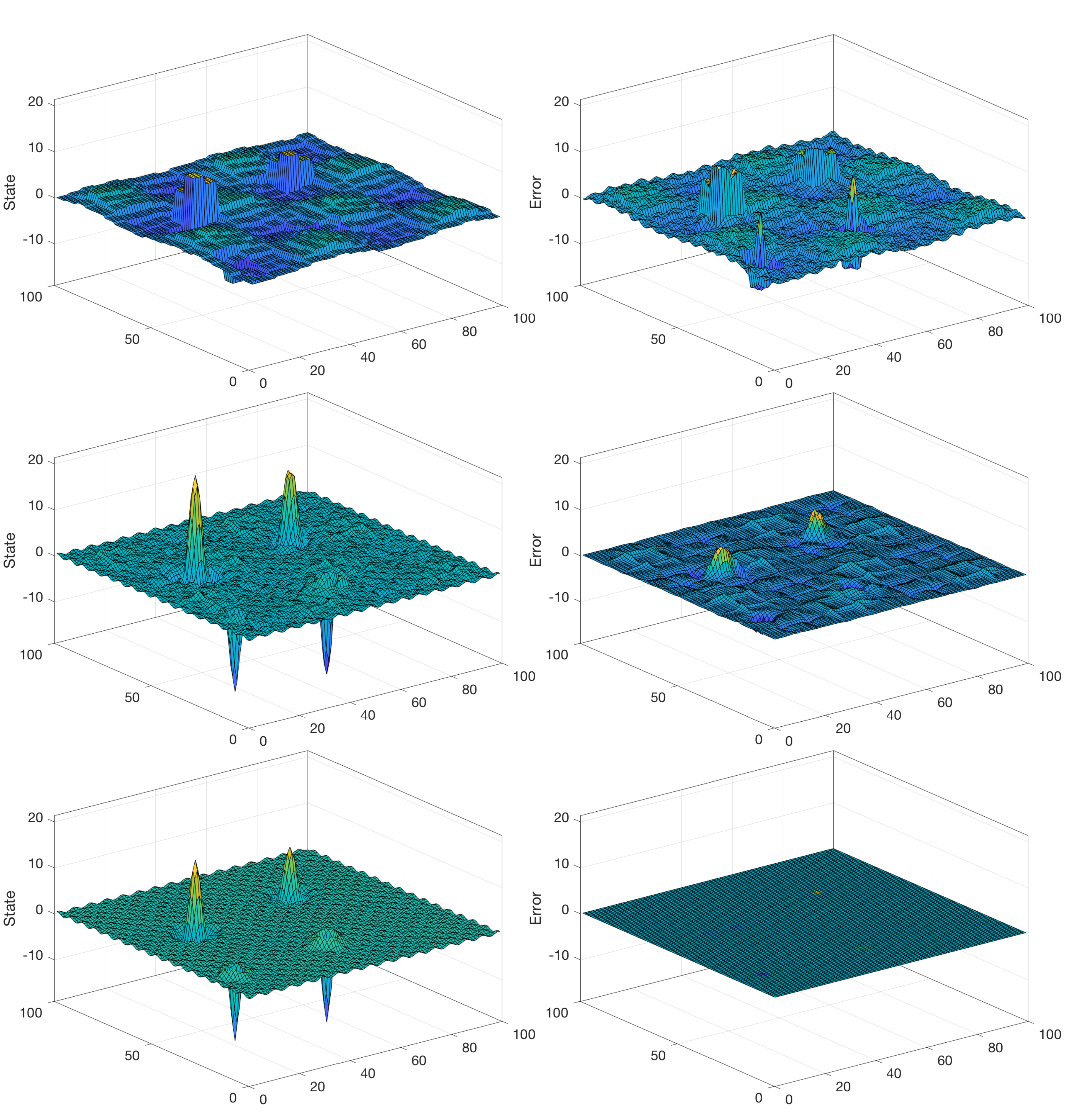}\caption{(Top) Potential field solution and error of coarse problem, (middle) solution and error of first GS update, and (bottom) solution and error of tenth GS update.}\label{spatial result}
\end{center}
\end{figure}

\begin{figure}[!htb]\begin{center}
\includegraphics[width=4.5in]{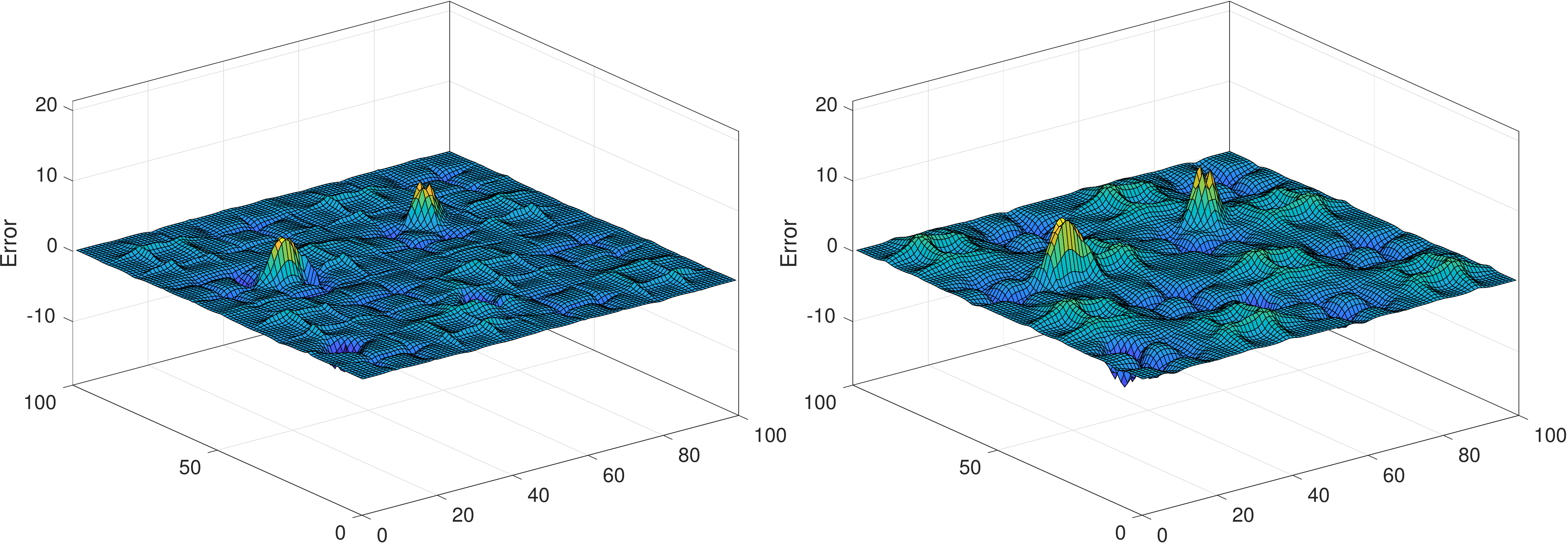}\caption{(Left) error for first GS update with coarsening and (right) error without coarsening.}\label{fig:spatial}
\end{center}
\end{figure}

\vspace{-0.2in}\subsection{Effect of Coarsening Strategy}

We now compare the efficiency of coarsening with different resolutions on the performance of GS. The results are given in Figure \ref{coarsening ordering figure} (top left and right) and reveal that using a higher resolution for the coarse problem does not necessarily result in better performance of the GS scheme. This is particularly evident in the temporal case while for the spatial case increasing the mesh resolution does help. We attribute this difference to the asymmetric nature of the temporal problem compared to the symmetric mesh of the spatial case.  For the temporal problem, we have found that the most effective coarsening strategy is to solve a sequence of coarse problems with increasing resolution. At each coarsening level, however, we only perform a single GS coordination step and the resulting coordination variables are used to initialize the GS coordination at the next level. The error evolution of this sequential coarsening scheme is shown in Figure \ref{coarsening ordering figure} (top left). We sequentially solved the coarse problems by using $M_c=1, M_c=2, M_c=4, M_c=5, M_c=10, M_c=20, M_c=25, M_c=50$ (this gives a total of eight GS steps). We note that, at the tenth GS step, the solution from this sequential coarsening scheme is about seventy times smaller than that obtained  with no coarsening. This can be attributed to the ability of the single step GS schemes to cover a wider range of frequencies. 
%

\vspace{-0.2in}\subsection{Effect of Coordination Order}
We solved the multi-scale temporal control problem \eqref{temporal GS problem} and the spatial control problem \eqref{spatial GS problem} with four different ordering methods for each problem. For temporal control problem, ordering method 1 was a lexicographic ordering, ordering method 2 was a reverse lexicographic ordering, ordering method 3 was a forward-backward ordering, and ordering method 4 was the red-black scheme. For the spatial control problem, ordering method 1 was a lexicographic ordering, ordering method 2 was a spiral-like ordering, ordering method 3 was the red-black ordering, and ordering method 4 was set by ordering the partitions based on the magnitude of the disturbance.  The results are presented in Figure \ref{coarsening ordering figure} (top left and right).  As can be seen, in temporal problem, the performance of reverse lexicographic ordering is significantly better than that achieved by other methods. This can be attributed to the asymmetry of the coupling topology. In particular, in the temporal problem, the primal variable information is propagated in forward direction while the dual information is propagated in reverse direction. It can be seen that dual information plays an important role in the convergence of the temporal problem.  In the spatial problem, the performance of the different orderings is virtually the same. The red-black ordering (which enables parallelism) achieves the same performance as the rest. We attribute this to the symmetry of the spatial domain. 

\begin{figure}[!htb]
\begin{center}
\includegraphics[width=4.5in]{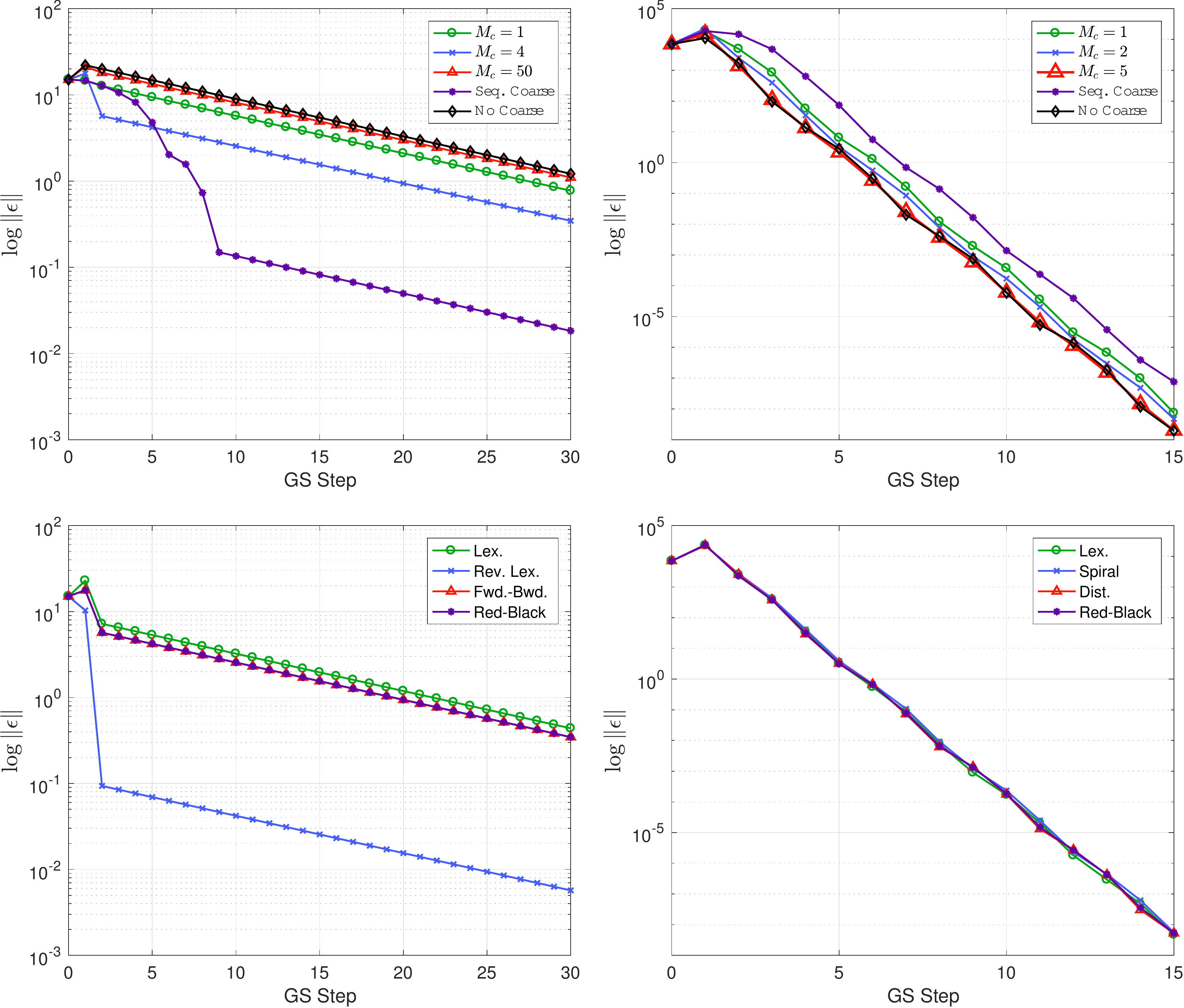}\caption{(Top-left) error of temporal control with different coarsening schemes, (top-right) error of spatial control with different coarsening schemes,  (bottom-left) error of temporal control with different ordering schemes, and (bottom-right) error of spatial control with different ordering schemes.}\label{coarsening ordering figure}
\end{center}
\end{figure}

\vspace{-0.2in}\section{Conclusions and Directions of Future Work}

We have presented basic elements of multi-grid computing schemes and illustrated how to use these to create hierarchical coordination architectures for complex systems. In particular, we discuss how Gauss-Seidel schemes can be seen as decentralized coordination schemes that handle high-frequency effects while coarse solution operators can be seen as low-resolution centralized coordination schemes that handle low-frequency effects. We believe that multi-grid provides a powerful framework to  systematically construct hierarchical coordination architectures but diverse challenges need to be addressed. In particular, it is important to understand convergence properties of GS schemes in more complex settings with nonlinear effects and inequality constraints. Moreover, it is necessary to develop effective coarsening (aggregation) schemes that can retain useful information while reducing complexity.  Moreover, it is desirable to combine hierarchical coordination schemes and existing control theory to analyze stability and robustness properties. 

\vspace{-0.2in}\section{Acknowledgements}

We acknowledge funding from the National Science Foundation under award NSF-EECS-1609183.  

\bibliography{vzavala}

\clearpage

\end{document}